\documentclass[11pt]{amsart}

\usepackage[T1]{fontenc}
\usepackage[utf8]{inputenc}
\usepackage{lmodern}
\usepackage{amsmath,amssymb,amsthm,mathtools}
\usepackage{microtype}
\usepackage{xcolor}
\usepackage[hidelinks]{hyperref}
\usepackage{enumitem}
\usepackage{aliascnt}

\theoremstyle{plain}
\newtheorem{theorem}{Theorem}[section]

\newaliascnt{proposition}{theorem}
\newtheorem{proposition}[proposition]{Proposition}
\aliascntresetthe{proposition}

\newaliascnt{lemma}{theorem}
\newtheorem{lemma}[lemma]{Lemma}
\aliascntresetthe{lemma}

\newaliascnt{corollary}{theorem}
\newtheorem{corollary}[corollary]{Corollary}
\aliascntresetthe{corollary}

\theoremstyle{definition}
\newaliascnt{definition}{theorem}
\newtheorem{definition}[definition]{Definition}
\aliascntresetthe{definition}

\newaliascnt{example}{theorem}
\newtheorem{example}[example]{Example}
\aliascntresetthe{example}

\theoremstyle{remark}
\newaliascnt{remark}{theorem}

\aliascntresetthe{remark}

\usepackage[nameinlink,noabbrev]{cleveref}

\crefname{theorem}{Theorem}{Theorems}
\Crefname{theorem}{Theorem}{Theorems}
\crefname{proposition}{Proposition}{Propositions}
\Crefname{proposition}{Proposition}{Propositions}
\crefname{lemma}{Lemma}{Lemmas}
\Crefname{lemma}{Lemma}{Lemmas}
\crefname{corollary}{Corollary}{Corollaries}
\Crefname{corollary}{Corollary}{Corollaries}
\crefname{definition}{Definition}{Definitions}
\Crefname{definition}{Definition}{Definitions}
\crefname{example}{Example}{Examples}
\Crefname{example}{Example}{Examples}

\DeclareMathOperator{\M}{\mathcal M}
\DeclareMathOperator{\TV}{TV}

\newcommand{\R}{\mathbb R}
\newcommand{\N}{\mathbb N}
\newcommand{\Nzero}{\mathbb N_0}
\newcommand{\X}{X_S}
\newcommand{\Y}{Y_S}
\newcommand{\1}{\mathbf 1}
\newcommand{\dd}{\,d}
\newcommand{\norm}[1]{\left\|#1\right\|}

\title[Effective Intrinsic Ergodicity on \(S\)-gap shifts]{Effective Intrinsic Ergodicity for renewal-type potentials on \(S\)-gap shifts}

\author[F. Haydarov]{Farhod Haydarov}
\address{V. I. Romanovskiy Institute of Mathematics, Uzbekistan Academy of Sciences, Tashkent, Uzbekistan}
\email{haydarov\_imc@mail.ru}

\author[S. Kadyrov]{Shirali Kadyrov}
\address{Department of Mathematics and Natural Sciences, SDU University, Kaskelen, Kazakhstan}
\email{shirali.kadyrov@sdu.edu.kz}

\author[M. Makhmudov]{Mirmukhsin Makhmudov}
\address{Research Unit of Mathematical Sciences, University of Oulu, Oulu, Finland}
\email{Mirmukhsin.Makhmudov@oulu.fi}

\author[K. Mamayusupov]{Khudoyor Mamayusupov}
\address{New Uzbekistan University, Tashkent, Uzbekistan}
\email{k.mamayusupov@newuu.uz}

\date{}
\subjclass[2020]{Primary 37B10, 37D35; Secondary 37A35, 37B40}
\keywords{Thermodynamic formalism, equilibrium states, effective uniqueness, effective intrinsic ergodicity, S-gap shifts, inducing, renewal shifts}

\begin{document}

\begin{abstract}
We establish effective intrinsic ergodicity for renewal-type potentials on one-sided \(S\)-gap shifts. Inducing on the one-symbol cylinder \([1]\) reduces the system to a full shift over the alphabet \(S\), where the induced potential becomes a one-symbol potential and the equilibrium measure is Bernoulli. The associated renewal equation has a unique solution \(P\), and under the condition \(P>\phi(0^\infty)\) (automatic when \(S\) is infinite), we show that \(P\) is the topological pressure and that the potential admits a unique equilibrium state \(\mu_\phi\).

Our main result is an effective intrinsic ergodicity estimate: invariant measures whose free energy is within \(\Delta\) of the pressure are \(O(\sqrt{\Delta})\)-close to \(\mu_\phi\) when tested against Hölder observables. As an application, every finite-word cylinder of positive \(\mu_\phi\)-measure yields a uniform pressure gap for the set of orbits avoiding that cylinder, leading in the entropy case to strict entropy and Hausdorff-dimension gaps.
\end{abstract}

\maketitle

\section{Introduction}

The variational principle is one of the central facts in thermodynamic formalism.  For a continuous map \(T:X\to X\) on a compact metric space and a continuous potential \(\phi:X\to\R\), it identifies the topological pressure with
\[
P_{\mathrm{top}}(X,T,\phi)
=\sup\left\{h_\mu(T)+\int \phi\,\dd\mu:
\mu \text{ is }T\text{-invariant}\right\}.
\]
A measure attaining this supremum is called an equilibrium state.  Classical results of Bowen, Ruelle, Walters and others give uniqueness and statistical properties for many uniformly hyperbolic systems and shifts of finite type \cite{bowen1975equilibrium,ruelle2004thermodynamic,walters1975variational,walters1982,keller1998equilibrium}.  A parallel operator-theoretic approach, based on transfer operators and spectral stability, is also central in the classical theory \cite{baladi2000positive,keller1999stability,gouezel2006banach,baladi2014linear}.

A natural problem is to understand what remains true beyond the finite-state Markov setting.  For countable Markov shifts, Sarig developed a thermodynamic formalism that extends many features of the classical theory \cite{sarig1999thermodynamic,sarig2001thermodynamic}.  For compact symbolic systems beyond specification, Climenhaga and Thompson proved uniqueness results using orbit decompositions and pressure estimates \cite{climenhaga2012intrinsic,climenhaga2013bowen,climenhaga2016unique}.  In particular, \(S\)-gap shifts are a standard family of compact symbolic systems which need not be shifts of finite type.

This paper concerns a quantitative version of uniqueness.  Suppose that \(\mu_\phi\) is the unique equilibrium state.  If another invariant measure \(\mu\) has free energy close to the pressure, must \(\mu\) be close to \(\mu_\phi\)?  Effective versions of this question go back to work of Kadyrov for measures of maximal entropy on finite-state shifts \cite{kadyrov2015effective}.  Related estimates for countable Markov shifts were proved by R\"uhr and by R\"uhr--Sarig \cite{ruhr2021pressure,ruhr2022effective}.  Recently, Pollicott gave an analogue for expanding interval maps, including beta-transformations \cite{pollicott2026}.  The aim here is to prove such an estimate for a compact symbolic system which is naturally coded by a countable renewal shift.

Let \(S\subset \Nzero\) be nonempty.  The one-sided \(S\)-gap shift \(\X\subset\{0,1\}^{\Nzero}\) consists of all sequences for which the number of zeros between two successive symbols \(1\) belongs to \(S\).  No mixing assumption is needed in the main theorem.  The cylinder
\[
E=[1]=\{x\in\X:x_0=1\}
\]
has a simple first-return structure.  If \(x\in E\) returns to \(E\), then the return time is \(n+1\) for some \(n\in S\), and the block read by the orbit is \(10^n\).  Thus inducing on \(E\) gives the full shift
\[
\Y=S^{\Nzero}
\]
with return-time function \(\tau(y)=y_0+1\).

We study H\"older potentials \(\phi\) on \(\X\) whose induced potential depends only on the current return block.  More precisely, if
\[
\Phi_\phi(y):=\sum_{j=0}^{\tau(y)-1}\phi(\sigma^j\pi(y)),
\]
where \(\pi(y)=10^{y_0}10^{y_1}10^{y_2}\cdots\), then \(\phi\) is called renewal-type if
\[
\Phi_\phi(y)=\psi(y_0)
\]
for some function \(\psi:S\to\R\).  The induced equilibrium problem is then one-dimensional.  The pressure parameter \(P\) is determined by the renewal equation
\begin{equation}\label{eq:intro-renewal-equation}
\sum_{n\in S}e^{\psi(n)-P(n+1)}=1.
\end{equation}
We prove below that this equation has a unique real solution.  The induced equilibrium measure is the Bernoulli measure with weights
\[
p_n=e^{\psi(n)-P(n+1)}.
\]

The passage from the induced system back to \(\X\) is not formal.  First, a bounded H\"older observable \(f\) on \(\X\) induces to
\[
F_f(y)=\sum_{j=0}^{\tau(y)-1}f(\sigma^j\pi(y)),
\]
which is generally unbounded because \(\tau\) is unbounded.  We handle this by writing
\[
F_f=f(0^\infty)\tau+G_f,
\]
where \(G_f\) is bounded and H\"older.  Second, the inducing formulas do not apply to an arbitrary invariant measure as soon as \(\mu([1])>0\).  The reason is that
\[
\bigcup_{j\ge 0}\sigma^{-j}[1]=\X\setminus\{0^\infty\}.
\]
Thus a measure may have both a positive atom at \(0^\infty\) and positive mass on \([1]\).  Before inducing, one must split off the atom at \(0^\infty\).  This correction is built into the proof below.

We now state the main theorem in a form that does not require topological mixing.  Fix \(\theta\in(0,1)\).  For \(x,y\in\X\), set
\[
d_\theta(x,y)=\theta^{t(x,y)},
\qquad
 t(x,y)=\min\{k\ge 0:x_k\ne y_k\},
\]
with the convention \(d_\theta(x,x)=0\).  Let \(C^\theta(\X)\) be the Banach space of bounded H\"older functions with norm
\[
\norm{f}_\theta=\norm{f}_\infty+|f|_\theta,
\qquad
|f|_\theta=\sup_{x\ne y}\frac{|f(x)-f(y)|}{d_\theta(x,y)}.
\]

\begin{theorem}\label{thm:main}
Let \(S\subset\Nzero\) be nonempty, and let \(\X\) be the one-sided \(S\)-gap shift.  Let \(\phi\in C^\theta(\X)\) be a renewal-type potential with induced one-symbol potential \(\psi:S\to\R\).  Let \(P\) be the unique real solution of \eqref{eq:intro-renewal-equation}.  If \(S\) is finite, assume the pressure separation condition
\[
P>\phi(0^\infty).
\]
If \(S\) is infinite, no additional pressure assumption is needed. Under these hypotheses,
\[
P=P_{\mathrm{top}}(\X,\sigma,\phi),
\]
and \(\phi\) has a unique equilibrium state \(\mu_\phi\).  Moreover, there exists a constant \(C>0\), depending only on \(S,\phi\) and \(\theta\), such that for every \(\sigma\)-invariant Borel probability measure \(\mu\) on \(\X\) and every \(f\in C^\theta(\X)\),
\[
\left|\int f\,\dd\mu-\int f\,\dd\mu_\phi\right|
\le
C\norm{f}_\theta
\sqrt{P-\left(h_\mu(\sigma)+\int\phi\,\dd\mu\right)}.
\]
\end{theorem}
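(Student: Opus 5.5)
The plan is to reduce everything to the induced full shift \(\Y=S^{\Nzero}\). There the induced potential \(\Psi(y)=\psi(y_0)-P\tau(y)\) depends on one coordinate, and \(\nu_p\), the Bernoulli measure with weights \(p_n=e^{\psi(n)-P(n+1)}\), is the equilibrium state of \(\Psi\), with pressure \(0\) by \eqref{eq:intro-renewal-equation}. Existence and uniqueness of the root \(P\) come first: \(P\mapsto\sum_{n\in S}e^{\psi(n)-P(n+1)}\) is strictly decreasing and continuous on the set where it is finite. It tends to \(0\) as \(P\to\infty\). It tends to \(\infty\) either as \(P\to-\infty\), or at the left endpoint of its domain of finiteness. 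For the latter case I would use that \(\psi(n)=(n+1)\phi(0^\infty)+O(1)\), since \(\phi\) is H\"older and \(\sigma^j\pi(y)\) agrees with \(0^\infty\) on long blocks. For infinite \(S\) this estimate also makes the series diverge at \(P=\phi(0^\infty)\), so the root automatically satisfies \(P>\phi(0^\infty)\).

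The basic identity is the following. Let \(\mu\) be invariant and write \(\mu=a\delta_{0^\infty}+(1-a)\mu'\) with \(\mu'(\{0^\infty\})=0\). When \(1-a>0\), \(\mu'\) induces to a \(\sigma\)-invariant \(\nu\) on \(\Y\) with \(\int\tau\,d\nu=1/\mu'([1])<\infty\). Abramov's formula and Kac's formula give
\[
h_{\mu'}+\int\phi\,d\mu'-P=\frac{h_\nu+\int\Psi\,d\nu}{\int\tau\,d\nu}.
\]
Hence the deficit is
\[
\Delta:=P-h_\mu-\int\phi\,d\mu=a\,(P-\phi(0^\infty))+(1-a)\frac{D(\nu)}{\int\tau\,d\nu},
\]
where \(D(\nu)=-(h_\nu+\int\Psi\,d\nu)\ge 0\). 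For the one-symbol potential, \(D(\nu)\) is at least the relative entropy \(\sum_n\nu_0(n)\log(\nu_0(n)/p_n)\) of the one-dimensional marginal. This is the variational principle on the full shift, in the form \(h_\nu\le H(\nu_0)\) and \(\int\Psi\,d\nu=\sum_n\nu_0(n)\log p_n\). In particular \(\Delta\ge0\), so \(P\ge P_{\mathrm{top}}\) by the variational principle. Equality \(\Delta=0\) forces \(a=0\), using \(P>\phi(0^\infty)\), and \(\nu=\nu_p\). One also needs \(\int\tau\,d\nu_p<\infty\), which holds because \(p_n\) decays exponentially when \(P>\phi(0^\infty)\). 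Then \(\mu_\phi\), the lift of \(\nu_p\), attains \(P\). This gives \(P=P_{\mathrm{top}}\) and uniqueness.

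For the effective estimate I would use \(F_f=f(0^\infty)\tau+G_f\) and write
\[
\int f\,d\mu'-\int f\,d\mu_\phi=\frac{\int G_f\,d\nu}{\int\tau\,d\nu}-\frac{\int G_f\,d\nu_p}{\int\tau\,d\nu_p}.
\]
The extra atom term \(a\,(f(0^\infty)-\int f\,d\mu')\) is bounded by \(2\norm{f}_\infty a\le C\norm f_\theta\Delta\), and \(\Delta\le\sqrt\Delta\) when \(\Delta\le1\); when \(\Delta\) is large the claim is trivial. The main obstacle is controlling \(\nu\) against \(\nu_p\) on \(G_f\) and on the unbounded \(\tau\), uniformly in the weight \(1/\int\tau\,d\nu\).
\begin{itemize}[leftmargin=*]
\item \emph{Control of \(\int G_f\,d\nu\).} \(G_f\) depends on more than \(y_0\), so the marginal alone is not enough. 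Here I would use the exponential decay of the transfer operator for \(\Psi\) on the full shift (spectral gap). It gives \(D(\nu)\ge c\,\bigl(\int G\,d\nu-\int G\,d\nu_p\bigr)^2/\norm{G}^2\) for bounded H\"older \(G\), via the standard second-order expansion of pressure \(t\mapsto P(\Psi+tG)\) with uniformly bounded second derivative. Equivalently, I would apply Pinsker-type bounds to the \(k\)-block marginals and sum the H\"older tails.
\item \emph{Control of \(\tau\).} I would pass to the tilted potential \(\Psi+t\tau\) for small \(t>0\). Its pressure is finite because \(p_n\) decays exponentially, and the convexity inequality gives
\[
\int\tau\,d\nu\le \frac{D(\nu)+P(\Psi+t\tau)}{t}.
\]
This bounds \(\int\tau\,d\nu\) by \(C(1+D(\nu))\), and a second-order version gives \(|\int\tau\,d\nu-\int\tau\,d\nu_p|\le C\sqrt{D(\nu)}\) when \(D\) is small.
\end{itemize}
Combining these with \(D(\nu)\le \Delta\int\tau\,d\nu/(1-a)\) yields \(D(\nu)\lesssim\Delta\) once \(\Delta\) is small, and therefore the \(C\norm f_\theta\sqrt\Delta\) bound.
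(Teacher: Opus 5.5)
Your proposal is correct. Its skeleton is the paper's:
\begin{itemize}
\item split off the atom at \(0^\infty\) and induce on \([1]\);
\item use \(\Delta_Y(\nu)=m\Delta_\phi(\mu')\), your \(D(\nu)\);
\item write \(F_f=f(0^\infty)\tau+G_f\);
\item get \(|m-\bar\tau_\phi|\lesssim\sqrt{D(\nu)}\) by exponentially tilting \(p\).
\end{itemize}
You depart from the paper in two sub-steps.

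\textbf{The square-root estimate on \(\Y\).} The paper uses no transfer operator. It telescopes \(H=T_0H+\sum_{n\ge0}(T_{n+1}H-T_nH)\) and applies Pinsker to the conditional law of \(y_n\) given \(y_0,\dots,y_{n-1}\). It then bounds the averaged conditional relative entropy by \(\Delta_Y(\nu)\) via \(H_\nu(y_n\mid\mathcal F_n)\ge h_\nu(\sigma)\). This is elementary, exploits the Bernoulli structure, and gives the explicit constant \(|H|_\theta/(\sqrt2(1-\theta))\).

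Your second-order expansion of \(t\mapsto P(\Psi+tG)\) also works. The gap for \(h\mapsto\sum_n p_n h(ny)\) on bounded H\"older functions is immediate, and \(t\mapsto L_{\Psi+tG}\) is analytic. The cost is that it imports Gurevich pressure and the variational principle on a countable full shift. The gain is that it would survive a non-Bernoulli induced Gibbs measure, which is the paper's first open question.

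Your block-marginal alternative also works, but only in telescoped form. The bound \(D(\nu|_{[0,k)}\,\|\,p^{\otimes k})\le kD(\nu)\) gives summable terms of size \(|G|_\theta\theta^k\sqrt{kD(\nu)}\). A single truncation at level \(k\) would lose a factor \(\sqrt{\log(1/D(\nu))}\).

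\textbf{The a priori bound on \(m=\int\tau\,\dd\nu\).} The paper argues on \(\X\): \cref{lem:phi-control} together with \(h_\mu\le H_{\mathrm{bin}}(\mu([1]))\) forces \(\mu([1])\ge p_*\). You stay on the induced side. The inequality \(tm\le D(\nu)+\log\sum_n p_n e^{t(n+1)}\), combined with \(D(\nu)=m\Delta_\phi(\mu')\), absorbs to \(m\le 2t^{-1}\log\sum_n p_n e^{t(n+1)}\) once \(\Delta_\phi(\mu')\le t/2\). This is a bit more economical, since it needs only the exponential moment of \(p\).

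\textbf{Routine points left implicit.} You should still check that \(\norm{G_f}_\theta\lesssim\norm{f}_\theta\) (\cref{lem:Gf}). In the uniqueness step you should also check that \(h_\nu=H(q)\) forces \(\nu\) to be Bernoulli.
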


The estimate in \cref{thm:main} has the same square-root form as the inequalities appearing in effective intrinsic ergodicity.  For shifts of finite type and the measure of maximal entropy, such estimates were proved by Kadyrov \cite{kadyrov2015effective}.  R\"uhr and R\"uhr--Sarig developed related estimates in countable-state settings \cite{ruhr2021pressure,ruhr2022effective}.  Pollicott's recent note proves an analogue for expanding interval maps and beta-transformations \cite{pollicott2026}.  The present paper differs from those results in that the system \(\X\) is compact, but the natural proof passes through a countable induced renewal shift and then returns to \(\X\).

The paper is organized as follows.  \Cref{sec:inducing} introduces \(S\)-gap shifts, the renewal coding, the lift from the induced shift, and the atom-splitting lemma.  \Cref{sec:renewal-potentials} defines renewal-type potentials and proves the exponential tail estimate used later.  \Cref{sec:equilibrium} constructs the equilibrium state and proves uniqueness.  \Cref{sec:induced-gap} proves the square-root estimate on the induced renewal shift.  \Cref{sec:transfer} transfers the estimate back to \(\X\) and proves \cref{thm:main}.  \Cref{sec:applications} gives the cylinder-avoidance application.

\section{The renewal coding and the inducing formulas}\label{sec:inducing}

\subsection{The \texorpdfstring{\(S\)}{S}-gap shift}

Let \(S\subset \Nzero\) be nonempty.  The one-sided \(S\)-gap shift \(\X\) is the set of all \(x=(x_k)_{k\ge0}\in\{0,1\}^{\Nzero}\) such that whenever two consecutive symbols \(1\) occur in \(x\), the number of zeros between them belongs to \(S\).  The point
\[
0^\infty=000\cdots
\]
always belongs to \(\X\).  We write \(\sigma:\X\to\X\) for the left shift and \(\M_\sigma(\X)\) for the set of \(\sigma\)-invariant Borel probability measures on \(\X\).

For reference, topological mixing of an \(S\)-gap shift is characterized by
\[
\gcd\{n+1:n\in S\}=1,
\]
with the usual non-degeneracy assumptions; see, for example, \cite{lindmarcus2021,dastjerdi2012}.  This characterization is not used below; the theorem holds without assuming mixing.

Let
\[
E=[1]=\{x\in\X:x_0=1\}.
\]
Then
\begin{equation}\label{eq:sweep}
\bigcup_{j\ge0}\sigma^{-j}E=\X\setminus\{0^\infty\}.
\end{equation}
Indeed, a point belongs to the union on the left precisely when it has at least one symbol \(1\).

\subsection{The induced renewal shift}

Let
\[
\Y=S^{\Nzero}
\]
with the left shift, again denoted by \(\sigma\).  Define
\[
\pi:\Y\to E,
\qquad
\pi(y)=10^{y_0}10^{y_1}10^{y_2}\cdots,
\]
and define the return-time function
\[
\tau(y)=y_0+1.
\]
On points of \(E\) which return to \(E\) infinitely often, the first return map to \(E\) is conjugate to \((\Y,\sigma)\) through \(\pi\).

If \(\nu\in\M_\sigma(\Y)\) and
\[
m_\nu:=\int \tau\,\dd\nu<\infty,
\]
we define its lift \(L(\nu)\in\M_\sigma(\X)\) by
\begin{equation}\label{eq:lift}
\int f\,\dd L(\nu)
=\frac1{m_\nu}
\int \sum_{j=0}^{\tau(y)-1}f(\sigma^j\pi(y))\,\dd\nu(y)
\end{equation}
for every bounded measurable \(f:\X\to\R\).  This is the standard tower construction.  It gives a \(\sigma\)-invariant probability measure on \(\X\), and it gives no mass to \(0^\infty\).

\subsection{Inducing measures on \texorpdfstring{\([1]\)}{[1]}}

The following formulation is the one used in the paper.  The condition \(\mu(\{0^\infty\})=0\) is essential.

\begin{lemma}[Kac--Abramov formulas in the present setting]\label{lem:kac-abramov}
Let \(\mu\in\M_\sigma(\X)\) satisfy \(\mu(\{0^\infty\})=0\).  Then \(\mu(E)>0\).  Let \(\nu\in\M_\sigma(\Y)\) be the measure obtained by normalizing \(\mu|_E\) and transporting it to \(\Y\) through \(\pi^{-1}\).  Then
\[
m:=\int\tau\,\dd\nu=\frac1{\mu(E)},
\]
\[
h_\mu(\sigma)=\frac{h_\nu(\sigma)}m,
\]
and, for every bounded measurable \(f:\X\to\R\),
\begin{equation}\label{eq:induced-integral}
\int f\,\dd\mu
=\frac1m\int F_f\,\dd\nu,
\qquad
F_f(y)=\sum_{j=0}^{\tau(y)-1}f(\sigma^j\pi(y)).
\end{equation}
Conversely, if \(\nu\in\M_\sigma(\Y)\) and \(\int\tau\,\dd\nu<\infty\), then \(L(\nu)\) satisfies these formulas with \(m=\int\tau\,\dd\nu\).
\end{lemma}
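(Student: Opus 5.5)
The plan is to reduce everything to the standard Kac and Abramov theorems for the first-return map to \(E\), after checking that \(\mu\)-almost every point of \(E\) returns to \(E\) infinitely often. The reduction then comes down to a careful bookkeeping of return times and their conjugacy with \((\Y,\sigma)\) via \(\pi\).

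\emph{Positivity of \(\mu(E)\).} Since \(\mu(\{0^\infty\})=0\), \eqref{eq:sweep} gives \(\mu\bigl(\bigcup_{j\ge0}\sigma^{-j}E\bigr)=1\). By invariance each \(\sigma^{-j}E\) has measure \(\mu(E)\), so \(\mu(E)=0\) would force the union to be null, a contradiction.

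\emph{Identifying the induced system.} By Poincaré recurrence, \(\mu\)-a.e.\ \(x\in E\) returns to \(E\) infinitely often. For such \(x\), the gaps between successive \(1\)'s are finite elements of \(S\). Hence \(x=\pi(y)\) with \(y\in\Y\) uniquely determined, the first return time is \(\tau(y)\), and the first return map is \(\pi\circ\sigma\circ\pi^{-1}\). Here \(\pi\) is a continuous injection, hence a Borel isomorphism onto its image. So \(\nu:=\mu|_E/\mu(E)\) transported by \(\pi^{-1}\) is well defined and \(\sigma\)-invariant on \(\Y\), since the induced measure is invariant under the return map.

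\emph{The three formulas.} Kac's lemma requires \(\mu\) to be carried, up to null sets, by the orbit saturation of \(E\); that holds here, which is the one point needing care. It gives \(\int_E r_E\,d\mu=\mu(\bigcup_j\sigma^{-j}E)=1\), i.e.\ \(m=1/\mu(E)\). For \eqref{eq:induced-integral}, I partition \(\X\setminus\{0^\infty\}\) into the tower floors \(\{\sigma^jx: x\in E,\ j<r_E(x)\}\). More concretely, for \(f\ge0\), every non-\(0^\infty\) point has a unique last visit to \(E\) in its past along the tower, and the standard Kac tower identity gives
\[
\int f\,d\mu=\int_E\sum_{j<r_E}f\circ\sigma^j\,d\mu .
\]
Rewriting the right side via \(\nu\) gives \((1/m)\int F_f\,d\nu\); general bounded \(f\) then follows by linearity. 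For entropy, Abramov's formula gives \(h_{\mu_E}(\sigma_E)=h_\mu(\sigma)/\mu(E)\). This formula needs ergodicity only in the form that \(\mu\) is supported on the saturation of \(E\); for nonergodic \(\mu\), I would use the ergodic decomposition, noting that a.e.\ component also gives \(0^\infty\) zero mass. Conjugacy then gives \(h_\nu=m\,h_\mu\).

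\emph{The converse.} Given \(\nu\) with \(\int\tau\,d\nu<\infty\), \eqref{eq:lift} defines \(L(\nu)\). The standard tower argument shows it is an invariant probability: invariance follows by telescoping \(f\circ\sigma\) over each return block, using \(\sigma^{\tau(y)}\pi(y)=\pi(\sigma y)\) and invariance of \(\nu\). It gives no mass to \(0^\infty\), since \(\pi(y)\) has infinitely many \(1\)'s. Its restriction to \(E\), normalized, is \(\pi_*\nu\), because only \(j=0\) lands in \(E\); so \(L(\nu)(E)=1/m\). The forward statements then apply with this \(\nu\), yielding the formulas.

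I expect the main obstacle to be the entropy identity in the nonergodic case, specifically justifying Abramov through the ergodic decomposition. The affinity of entropy and of \(1/\mu(E)\)-weighted averages must be matched: the decomposition of \(\mu\) corresponds to that of \(\nu\) with weights proportional to \(\mu_\omega(E)\). Everything else is the routine tower bookkeeping above.
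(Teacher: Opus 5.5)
Your proposal is correct and follows essentially the same route as the paper. Both use sweep-out plus invariance for \(\mu(E)>0\), Poincar\'e recurrence to identify the first-return map with \((\Y,\sigma)\) through \(\pi\), the standard Kac, Abramov and tower identities, and the tower construction for the converse; you give more detail than the paper (the ergodic-decomposition check for Abramov, and the telescoping proof that \(L(\nu)\) is invariant).

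One small caution concerns the tower identity. In a one-sided shift the sets \(\sigma^j(E\cap\{r_E>j\})\) are not disjoint: a point \(01\cdots\) lies in the \(j\)-th one for every \(j\in S\setminus\{0\}\). Points also have no ``past''. So justify the identity not by a literal partition into floors but by writing \(\mu(B)=\mu(\sigma^{-N}B)\), splitting according to the last visit to \(E\) in \([0,N]\), and letting \(N\to\infty\); passing to the natural extension also works.
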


\begin{proof}
Since \(\mu(\{0^\infty\})=0\), \eqref{eq:sweep} gives
\[
1=\mu\left(\bigcup_{j\ge0}\sigma^{-j}E\right).
\]
If \(\mu(E)=0\), invariance gives \(\mu(\sigma^{-j}E)=0\) for every \(j\), a contradiction.  Hence \(\mu(E)>0\).  Poincar\'e recurrence implies that the normalized measure on \(E\) is supported on points returning to \(E\) infinitely often. Equivalently, the points with only finitely many symbols \(1\) have zero \(\mu\)-measure, since they eventually enter \(0^\infty\) and \(\mu(\{0^\infty\})=0\).  The first return map is then identified with \((\Y,\sigma)\) by \(\pi\).  Kac's formula, Abramov's entropy formula and the tower integral identity give the displayed formulas; see \cite{kac1947,abramov1959,walters1982,aaronson1997}.  The converse is the standard tower construction.\end{proof}

\subsection{Splitting off the fixed point}

For arbitrary invariant measures one must first remove the possible atom at \(0^\infty\).

\begin{lemma}[Atom splitting]\label{lem:atom-splitting}
Let \(\mu\in\M_\sigma(\X)\), and set
\[
a:=\mu(\{0^\infty\}),
\qquad
\beta:=1-a.
\]
If \(\beta>0\), define
\[
\widetilde\mu:=\frac{\mu-a\delta_{0^\infty}}{\beta}.
\]
Then \(\widetilde\mu\in\M_\sigma(\X)\), \(\widetilde\mu(\{0^\infty\})=0\), and
\[
\mu=a\delta_{0^\infty}+\beta\widetilde\mu.
\]
Moreover, for every continuous potential \(\phi\),
\[
h_\mu(\sigma)+\int\phi\,\dd\mu
=
\beta\left(h_{\widetilde\mu}(\sigma)+\int\phi\,\dd\widetilde\mu\right)
+a\phi(0^\infty).
\]
Consequently, if \(P\in\R\) and
\[
\Delta_\phi(\eta):=P-\left(h_\eta(\sigma)+\int\phi\,\dd\eta\right),
\]
then
\begin{equation}\label{eq:defect-splitting}
\Delta_\phi(\mu)=a\bigl(P-\phi(0^\infty)\bigr)+\beta\Delta_\phi(\widetilde\mu)
\end{equation}
whenever \(\beta>0\).
\end{lemma}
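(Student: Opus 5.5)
The plan is to verify the three assertions in order: that \(\widetilde\mu\) is a genuine invariant probability measure, then the free-energy identity via affinity of entropy, and finally the defect identity \eqref{eq:defect-splitting} by a one-line computation.

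\textbf{Invariance and positivity.} First, \(\mu-a\delta_{0^\infty}\) is a nonnegative measure. For a Borel set \(A\), if \(0^\infty\notin A\) it equals \(\mu(A)\ge0\). If \(0^\infty\in A\), it equals \(\mu(A\setminus\{0^\infty\})\ge0\). Its total mass is \(1-a=\beta\), so \(\widetilde\mu\) is a probability measure, and by construction \(\widetilde\mu(\{0^\infty\})=0\).

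Invariance follows because \(\delta_{0^\infty}\) is \(\sigma\)-invariant, since \(0^\infty\) is a fixed point. Hence \(\delta_{0^\infty}(\sigma^{-1}A)=\delta_{0^\infty}(A)\), and the difference of two invariant measures is invariant. The decomposition \(\mu=a\delta_{0^\infty}+\beta\widetilde\mu\) is then a tautology.

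\textbf{The free-energy identity.} Here I use that the map \(\eta\mapsto h_\eta(\sigma)\) is affine on \(\M_\sigma(\X)\); this is standard, see \cite{walters1982}. That gives
\[
h_\mu(\sigma)=a\,h_{\delta_{0^\infty}}(\sigma)+\beta\,h_{\widetilde\mu}(\sigma).
\]
Since \(h_{\delta_{0^\infty}}(\sigma)=0\), because a point mass has zero entropy, this reduces to \(h_\mu(\sigma)=\beta\,h_{\widetilde\mu}(\sigma)\). Linearity of the integral gives
\[
\int\phi\,\dd\mu=a\phi(0^\infty)+\beta\int\phi\,\dd\widetilde\mu.
\]
Adding the two identities yields the claimed formula. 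The edge case \(a=0\) is trivial, since then \(\widetilde\mu=\mu\).

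\textbf{The defect identity.} Write \(P=aP+\beta P\), which holds because \(a+\beta=1\), and subtract the free-energy identity. This gives
\[
\Delta_\phi(\mu)=a\bigl(P-\phi(0^\infty)\bigr)+\beta\Bigl(P-h_{\widetilde\mu}(\sigma)-\int\phi\,\dd\widetilde\mu\Bigr),
\]
which is exactly \eqref{eq:defect-splitting}.

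The only point needing care is the affinity of entropy. It holds for arbitrary invariant measures, not only ergodic ones, and it does not require \(\delta_{0^\infty}\) and \(\widetilde\mu\) to be ergodic; it only needs them to be invariant. I would simply cite it rather than reprove it.
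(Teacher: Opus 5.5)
Your proof is correct and follows the same route as the paper: invariance because \(0^\infty\) is a fixed point, the free-energy identity from affinity of entropy together with linearity of the integral, and the defect identity by subtracting from \(P=aP+\beta P\). You add details the paper leaves implicit, namely nonnegativity of \(\mu-a\delta_{0^\infty}\) and \(h_{\delta_{0^\infty}}(\sigma)=0\); the subtraction is also legitimate because all entropies here are at most \(\log 2\).
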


\begin{proof}
The point \(0^\infty\) is fixed, so removing its atom and renormalizing gives an invariant probability measure when \(\beta>0\).  The displayed decomposition of the free energy follows from the affinity of measure-theoretic entropy on invariant probability measures and from the linearity of the integral; see \cite{walters1982}.  The formula for the defect is obtained by subtracting the free energy from \(P\).\end{proof}

\section{Renewal-type potentials}\label{sec:renewal-potentials}

We use the same symbolic metric on \(\Y\):
\[
d_\theta(y,z)=\theta^{t(y,z)},
\qquad
 t(y,z)=\min\{k\ge0:y_k\ne z_k\},
\]
with \(d_\theta(y,y)=0\).  The space \(C^\theta(\Y)\) consists of bounded H\"older functions with the corresponding norm.

\begin{definition}\label{def:renewal-type}
Let \(\phi\in C^\theta(\X)\).  The induced potential of \(\phi\) is
\[
\Phi_\phi(y)=\sum_{j=0}^{\tau(y)-1}\phi(\sigma^j\pi(y)).
\]
We say that \(\phi\) is \emph{renewal-type} if there is a function \(\psi:S\to\R\) such that
\[
\Phi_\phi(y)=\psi(y_0)
\qquad (y\in\Y).
\]
In this case \(\psi\) is called the induced one-symbol potential.
\end{definition}

Equivalently, the total weight accumulated along a return block \(10^n\) depends only on \(n\).  The following examples illustrate the condition and also show that it is a genuine restriction.

\subsection{Examples and nonexamples}

\begin{example}\label{ex:renewal-type-examples}
The following potentials are typical.
\begin{enumerate}[label=(\roman*)]
\item The zero potential \(\phi\equiv0\) is renewal-type with \(\psi(n)=0\) for all \(n\in S\).

\item More generally, suppose that \(\phi\in C^\theta(\X)\) and that, for each \(n\in S\), there is a constant \(c_n\in\R\) such that
\[
\sum_{j=0}^{n}\phi(\sigma^j x)=c_n
\qquad\text{for every }x\in[10^n1].
\]
Then \(\Phi_\phi(y)=c_{y_0}\), so \(\phi\) is renewal-type with \(\psi(n)=c_n\).

\item Fix \(0<\rho<\theta\), and define
\[
\phi_\rho(x)=
\begin{cases}
\rho^n, & x\in[10^n1]\text{ for some }n\in S,\\
0, & \text{otherwise}.
\end{cases}
\]
Equivalently, \(\phi_\rho=\sum_{n\in S}\rho^n\1_{[10^n1]}\), since the cylinders \([10^n1]\) are disjoint.  If \(t(x,z)=m\ge1\) and the values of \(\phi_\rho\) at \(x\) and \(z\) differ, then any nonzero value involved is at most \(\rho^{m-1}\).  Hence
\[
|\phi_\rho(x)-\phi_\rho(z)|\le 2\rho^{m-1}\le \frac2\rho\,\theta^m,
\]
and \(\phi_\rho\in C^\theta(\X)\).  Along a first return block, only the initial point begins with \(1\), so
\[
\Phi_{\phi_\rho}(y)=\phi_\rho(\pi(y))=\rho^{y_0}.
\]
Thus \(\phi_\rho\) is renewal-type with \(\psi(n)=\rho^n\).

\item The renewal-type condition is not automatic.  Assume that \(0\in S\) and \(S\cap\N\ne\emptyset\).  Then \(\phi=\1_{[010]}\) is not renewal-type.  Indeed, for \(\pi(y)=10^{y_0}10^{y_1}\cdots\), the word \(010\) occurs during the first return block exactly when \(y_0\ge1\) and \(y_1\ge1\).  Thus
\[
\Phi_\phi(y)=\1_{\{y_0\ge1,\,y_1\ge1\}},
\]
which depends on \(y_1\) as well as \(y_0\).
\end{enumerate}
\end{example}

The next estimate is the source of the exponential tail of the induced equilibrium distribution.

\begin{lemma}\label{lem:psi-tail}
Let \(\phi\in C^\theta(\X)\) be renewal-type with induced potential \(\psi:S\to\R\).  Then, for every \(n\in S\),
\begin{equation}\label{eq:psi-tail}
\left|\psi(n)-(n+1)\phi(0^\infty)\right|
\le
\frac{|\phi|_\theta}{1-\theta}.
\end{equation}
Consequently, if \(P>\phi(0^\infty)\) and
\[
p_n=e^{\psi(n)-P(n+1)},
\]
then there is \(\eta>0\) such that
\begin{equation}\label{eq:p-exp-tail}
\sum_{n\in S}p_n e^{\eta(n+1)}<\infty.
\end{equation}
In particular, \(\sum_{n\in S}(n+1)p_n<\infty\).
\end{lemma}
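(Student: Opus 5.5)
The plan is to fix \(n\in S\), pick any \(y\in\Y\) with \(y_0=n\), and write \(x=\pi(y)=10^{n}1\cdots\). Since \(\phi\) is renewal-type, \(\psi(n)=\Phi_\phi(y)=\sum_{j=0}^{n}\phi(\sigma^jx)\). Then
\[
\psi(n)-(n+1)\phi(0^\infty)=\sum_{j=0}^{n}\bigl(\phi(\sigma^jx)-\phi(0^\infty)\bigr),
\]
and I will bound each term by \(|\phi|_\theta\, d_\theta(\sigma^jx,0^\infty)\), using only that \(0^\infty\in\X\) and the definition of \(|\cdot|_\theta\).

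The key step, and the one place where care is needed to get the sharp constant \(1/(1-\theta)\), is computing the agreement times exactly. For \(j=0\) we have \(x_0=1\ne0\), so \(t(x,0^\infty)=0\) and the distance is \(1\). For \(1\le j\le n\), the point \(\sigma^jx\) reads \(x_j,x_{j+1},\dots\). This consists of the \(n-j+1\) zeros at indices \(j,\dots,n\), followed by the \(1\) at index \(n+1\). Hence \(t(\sigma^jx,0^\infty)=n-j+1\in\{1,\dots,n\}\), and the distance is \(\theta^{n-j+1}\).

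This is where one must not miscount. The last point \(\sigma^n x=01\cdots\) still agrees with \(0^\infty\) at index \(0\), so it contributes \(\theta\), not \(1\). Summing gives
\[
|\psi(n)-(n+1)\phi(0^\infty)|\le|\phi|_\theta\sum_{k=0}^{n}\theta^{k}\le\frac{|\phi|_\theta}{1-\theta},
\]
which is \eqref{eq:psi-tail}. The case \(n=0\) is covered, since the sum then has only the \(j=0\) term.

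For the tail statement, set \(K=|\phi|_\theta/(1-\theta)\) and \(\delta=P-\phi(0^\infty)>0\). By \eqref{eq:psi-tail},
\[
\psi(n)-P(n+1)\le K-(n+1)\delta,\qquad\text{so}\qquad p_n\le e^{K}e^{-\delta(n+1)}.
\]
Choose \(\eta=\delta/2\). Then
\[
\sum_{n\in S}p_ne^{\eta(n+1)}\le e^{K}\sum_{n\ge0}e^{-\delta(n+1)/2}<\infty,
\]
which is \eqref{eq:p-exp-tail}. Finally, \((n+1)\le C_\eta e^{\eta(n+1)}\) for a constant \(C_\eta\) depending only on \(\eta\), and this yields \(\sum_{n\in S}(n+1)p_n<\infty\).
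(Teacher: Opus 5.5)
Your proof is correct and follows the paper's argument essentially line by line. Both compare \(\psi(n)=\sum_{j=0}^{n}\phi(\sigma^j\pi(y))\) termwise with \((n+1)\phi(0^\infty)\), using the distances \(1\) (for \(j=0\)) and \(\theta^{n+1-j}\) (for \(1\le j\le n\)), and both take \(\eta=(P-\phi(0^\infty))/2\) to get the exponential tail. Your exact computation of the agreement times, where the paper only uses upper bounds, is a harmless refinement.
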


\begin{proof}
Fix \(n\in S\) and choose \(y\in\Y\) with \(y_0=n\).  Since \(\phi\) is renewal-type,
\[
\psi(n)=\sum_{j=0}^{n}\phi(\sigma^j\pi(y)).
\]
For \(j=0\), the distance from \(\pi(y)\) to \(0^\infty\) is at most \(1\).  For \(1\le j\le n\), the point \(\sigma^j\pi(y)\) begins with \(n+1-j\) zeros before the next \(1\), hence
\[
d_\theta(\sigma^j\pi(y),0^\infty)\le \theta^{n+1-j}.
\]
Therefore
\[
\begin{aligned}
\left|\psi(n)-(n+1)\phi(0^\infty)\right|
&\le |\phi|_\theta\left(1+\sum_{j=1}^{n}\theta^{n+1-j}\right)  \\
&\le \frac{|\phi|_\theta}{1-\theta}.
\end{aligned}
\]
If \(\eta=(P-\phi(0^\infty))/2\), then \eqref{eq:psi-tail} gives
\[
p_n e^{\eta(n+1)}
\le
\exp\left(\frac{|\phi|_\theta}{1-\theta}\right)e^{-\eta(n+1)}.
\]
The right-hand side is summable over \(S\subset\Nzero\), proving \eqref{eq:p-exp-tail}.\end{proof}

\begin{lemma}[The renewal equation]\label{lem:renewal-solution}
Let \(\phi\in C^\theta(\X)\) be renewal-type with induced one-symbol potential \(\psi:S\to\R\).  The equation
\begin{equation}\label{eq:renewal-equation}
\sum_{n\in S}e^{\psi(n)-P(n+1)}=1
\end{equation}
has a unique real solution \(P\). If \(S\) is infinite, then this solution satisfies
\[
P>\phi(0^\infty).
\]
\end{lemma}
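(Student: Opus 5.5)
Consider the function
\[
Z(s):=\sum_{n\in S}e^{\psi(n)-s(n+1)},\qquad s\in\R,
\]
with values in \((0,\infty]\). The solution of \eqref{eq:renewal-equation} will be obtained by locating where \(Z\) crosses the level \(1\). The main input is the bound \eqref{eq:psi-tail} of \cref{lem:psi-tail}. Write \(c:=\phi(0^\infty)\) and \(K:=|\phi|_\theta/(1-\theta)\). Then
\[
e^{-K}e^{(c-s)(n+1)}\le e^{\psi(n)-s(n+1)}\le e^{K}e^{(c-s)(n+1)}.
\]

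The first step is to identify where \(Z\) is finite. For \(s>c\) the upper bound is a geometric series, so \(Z(s)<\infty\). Each term \(e^{\psi(n)-s(n+1)}\) is strictly decreasing and continuous in \(s\). Monotone convergence and dominated convergence (dominating on \([s_0,\infty)\) by the terms at \(s_0>c\)) then show that \(Z\) is continuous and strictly decreasing on \((c,\infty)\). Moreover \(Z(s)\to0\) as \(s\to\infty\), since \(Z(s)\le e^{K}\sum_{n}e^{(c-s)(n+1)}\).

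The second step is the case \(S\) infinite. At \(s\le c\) the lower bound gives terms at least \(e^{-K}\) for infinitely many \(n\), so \(Z(s)=\infty\). As \(s\downarrow c\), monotone convergence gives
\[
Z(s)\to\sum_{n\in S}e^{\psi(n)-c(n+1)}\ge\sum_{n\in S}e^{-K}=\infty.
\]
By the intermediate value theorem there is a solution \(P>c\). It is unique on \((c,\infty)\) by strict monotonicity, and no solution exists in \((-\infty,c]\) because \(Z=\infty\) there. This also yields \(P>\phi(0^\infty)\).

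The third step is the case \(S\) finite. Here \(Z\) is a finite sum of strictly decreasing exponentials, hence continuous and strictly decreasing on all of \(\R\). It tends to \(\infty\) as \(s\to-\infty\), because \(S\) is nonempty and \(n+1\ge1\), and it tends to \(0\) as \(s\to+\infty\). Hence there is exactly one root.

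I expect the only delicate point to be the boundary behaviour at \(s=c\) when \(S\) is infinite. One must rule out a root at or below \(c\) and must show that \(Z\) blows up as \(s\downarrow c\). Both follow from the lower bound in \eqref{eq:psi-tail} together with monotone convergence. The same argument justifies treating \(Z\) as \((0,\infty]\)-valued throughout.
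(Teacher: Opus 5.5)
Your proposal is correct and follows the paper's proof essentially step for step. You use the same function \(Z\), the same two-sided exponential comparison from \eqref{eq:psi-tail} in \cref{lem:psi-tail} to put the finiteness threshold at \(\phi(0^\infty)\) when \(S\) is infinite, and strict monotonicity with the intermediate value theorem in both cases. Your dominated- and monotone-convergence justifications for continuity and for the blow-up as \(s\downarrow\phi(0^\infty)\) supply details the paper leaves implicit.
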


\begin{proof}
Set
\[
Z(t)=\sum_{n\in S}e^{\psi(n)-t(n+1)}.
\]
For finite \(S\), the function \(Z\) is continuous and strictly decreasing on \(\R\), with \(Z(t)\to\infty\) as \(t\to-\infty\) and \(Z(t)\to0\) as \(t\to\infty\). Hence there is a unique \(P\in\R\) with \(Z(P)=1\).

Assume now that \(S\) is infinite. Let \(B=|\phi|_\theta/(1-\theta)\). By \cref{lem:psi-tail},
\[
e^{-B}e^{-(t-\phi(0^\infty))(n+1)}
\le e^{\psi(n)-t(n+1)}
\le e^{B}e^{-(t-\phi(0^\infty))(n+1)}.
\]
Thus \(Z(t)<\infty\) for \(t>\phi(0^\infty)\), while \(Z(t)=\infty\) for \(t\le\phi(0^\infty)\). On \((\phi(0^\infty),\infty)\), the function \(Z\) is continuous and strictly decreasing. Also \(Z(t)\to\infty\) as \(t\downarrow\phi(0^\infty)\), and \(Z(t)\to0\) as \(t\to\infty\). Hence there is a unique solution of \(Z(P)=1\), and it satisfies \(P>\phi(0^\infty)\).\end{proof}

Let \(P\) denote the unique solution of \eqref{eq:renewal-equation}. Throughout the rest of the paper we assume the pressure separation condition
\begin{equation}\label{eq:pressure-separation}
P>\phi(0^\infty).
\end{equation}
By \cref{lem:renewal-solution}, this condition is automatic when \(S\) is infinite. Define
\begin{equation}\label{eq:p-def}
p_n=e^{\psi(n)-P(n+1)},
\qquad n\in S,
\end{equation}
and let \(\nu_\phi\) be the Bernoulli measure on \(\Y\) with one-coordinate distribution \(p=(p_n)_{n\in S}\).  By \cref{lem:psi-tail},
\[
\bar\tau_\phi:=\int \tau\,\dd\nu_\phi=\sum_{n\in S}(n+1)p_n<\infty.
\]
Thus the lifted measure
\begin{equation}\label{eq:mu-phi-lift}
\mu_\phi:=L(\nu_\phi)
\end{equation}
is well defined.

\section{The equilibrium state}\label{sec:equilibrium}

We begin with a countable-alphabet entropy estimate.  For a probability vector \(q=(q_n)_{n\in S}\), write
\[
H(q)=-\sum_{n\in S}q_n\log q_n
\]
with the convention \(0\log0=0\), and write
\[
D(q\|p)=\sum_{n\in S}q_n\log\frac{q_n}{p_n}
\]
for relative entropy.

\begin{lemma}\label{lem:finite-entropy}
Let \(q=(q_n)_{n\in S}\) be a probability vector such that
\[
\sum_{n\in S}q_n(n+1)<\infty.
\]
Then \(H(q)<\infty\), \(\sum_n q_n|\psi(n)|<\infty\), and
\begin{equation}\label{eq:relative-entropy-identity}
D(q\|p)
=
-H(q)-\sum_{n\in S}q_n\psi(n)
+P\sum_{n\in S}q_n(n+1).
\end{equation}
In particular,
\begin{equation}\label{eq:gibbs-inequality}
H(q)+\sum_{n\in S}q_n\psi(n)
\le
P\sum_{n\in S}q_n(n+1),
\end{equation}
with equality if and only if \(q=p\).
\end{lemma}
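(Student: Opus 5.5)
The plan is to exploit two facts: \(\log p_n\) is affine in \(n+1\) up to the bounded perturbation from \cref{lem:psi-tail}, and a finite first moment of \(q\) then controls every series that appears. Throughout, \(P>\phi(0^\infty)\) is assumed as in \eqref{eq:pressure-separation}, and \(B=|\phi|_\theta/(1-\theta)\).

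\emph{Integrability of \(\psi\).} By \eqref{eq:psi-tail}, \(|\psi(n)|\le (n+1)|\phi(0^\infty)|+B\). Hence
\[
\sum_n q_n|\psi(n)|\le |\phi(0^\infty)|\sum_n q_n(n+1)+B<\infty .
\]
It follows that \(-\log p_n=P(n+1)-\psi(n)\) is \(q\)-integrable, with
\[
\sum_n q_n(-\log p_n)=P\sum_n q_n(n+1)-\sum_n q_n\psi(n).
\]

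\emph{Finiteness of \(H(q)\).} Here I would use the standard comparison with an exponential reference distribution. Fix any \(\lambda>0\) and set \(r_n=e^{-\lambda(n+1)}/Z_\lambda\), where \(Z_\lambda=\sum_{n\in S}e^{-\lambda(n+1)}<\infty\). Gibbs' inequality for the finitely supported truncations, followed by monotone convergence, gives
\[
\sum_n q_n\log\frac{1}{q_n}\le \sum_n q_n\log\frac1{r_n}=\lambda\sum_n q_n(n+1)+\log Z_\lambda<\infty .
\]
The one technical point is that \(-q_n\log q_n\ge0\) only when \(q_n\le1\), which always holds. So the partial sums of \(H(q)\) are monotone, and the truncation argument is legitimate.

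\emph{Identity \eqref{eq:relative-entropy-identity}.} Once both \(\sum_n q_n\log q_n\) and \(\sum_n q_n\log p_n\) are absolutely convergent, I write \(q_n\log(q_n/p_n)=q_n\log q_n-q_n\log p_n\) and sum termwise. This gives \(D(q\|p)=-H(q)-\sum_n q_n\psi(n)+P\sum_n q_n(n+1)\). Note that \(D(q\|p)\) is then a genuinely convergent series, not merely a sum taken in \([0,\infty]\).

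\emph{Inequality \eqref{eq:gibbs-inequality} and the equality case.} Since \(\sum_n p_n=1\) by \eqref{eq:renewal-equation}, we have \(D(q\|p)\ge0\) by Jensen's inequality, or by the termwise bound \(q_n\log(q_n/p_n)\ge q_n-p_n\) summed over \(n\); the right-hand side sums to \(\sum q_n-\sum p_n=0\). Equality holds iff \(q_n=p_n\) for all \(n\), because \(x\log x-x+1>0\) for \(x\ne1\), applied with \(x=q_n/p_n\) and \(p_n>0\). Rearranging the identity then yields \eqref{eq:gibbs-inequality}.

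The only step needing care is the finiteness of \(H(q)\) for a countable alphabet, and the exponential reference measure settles it. The rest is bookkeeping with absolutely convergent series.
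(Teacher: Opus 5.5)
Your proposal is correct and follows the paper's route: you bound \(H(q)\) by comparison with the exponential reference distribution \(r_n\propto e^{-\lambda(n+1)}\), split \(D(q\|p)\) termwise using \(\log p_n=\psi(n)-P(n+1)\), and invoke the nonnegativity and equality case of relative entropy. The differences are cosmetic. You bound \(|\psi(n)|\) via \eqref{eq:psi-tail} instead of the paper's \(|\psi(n)|\le(n+1)\norm{\phi}_\infty\), and neither bound needs \(P>\phi(0^\infty)\). You also write out the truncation and equality-case details that the paper cites as standard.
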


\begin{proof}
The finite first moment implies finite entropy by comparison with an exponential distribution.  Indeed, for any \(a>0\), the partition function \(Z_a=\sum_{n\in S}e^{-a(n+1)}\) is finite, and the Gibbs inequality gives
\[
H(q)\le a\sum_{n\in S}q_n(n+1)+\log Z_a<\infty.
\]
Since \(\phi\) is bounded and \(\psi(n)\) is a sum of \(n+1\) values of \(\phi\),
\[
|\psi(n)|\le (n+1)\norm{\phi}_\infty,
\]
so \(\sum_nq_n|\psi(n)|<\infty\).  Finally, \(\log p_n=\psi(n)-P(n+1)\), and therefore
\[
D(q\|p)=\sum_nq_n\log q_n-\sum_nq_n\log p_n,
\]
which is exactly \eqref{eq:relative-entropy-identity}.  The inequality and equality case are the standard properties of relative entropy.\end{proof}

\begin{proposition}\label{prop:equilibrium}
The measure \(\mu_\phi=L(\nu_\phi)\) is the unique equilibrium state of \(\phi\) on \(\X\), and
\[
P_{\mathrm{top}}(\X,\sigma,\phi)=P.
\]
\end{proposition}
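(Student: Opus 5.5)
The plan is to show that every invariant measure has free energy at most \(P\), that \(\mu_\phi\) attains \(P\), and that equality forces \(\mu=\mu_\phi\). The variational principle on the compact space \(\X\) then gives \(P_{\mathrm{top}}(\X,\sigma,\phi)=P\).

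\emph{Step 1: \(\mu_\phi\) attains \(P\).} Since \(\nu_\phi\) is Bernoulli with one-coordinate distribution \(p\), we have \(h_{\nu_\phi}(\sigma)=H(p)\). The induced potential is \(\Phi_\phi(y)=\psi(y_0)\), so \(\int\Phi_\phi\,\dd\nu_\phi=\sum_n p_n\psi(n)\). By \cref{lem:kac-abramov} (converse direction, with \(m=\bar\tau_\phi\)),
\[
h_{\mu_\phi}(\sigma)+\int\phi\,\dd\mu_\phi=\frac{H(p)+\sum_n p_n\psi(n)}{\bar\tau_\phi}.
\]
Equality holds in \eqref{eq:gibbs-inequality} for \(q=p\), and this turns the right-hand side into exactly \(P\).

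\emph{Step 2: upper bound and uniqueness for measures without an atom at \(0^\infty\).} Let \(\mu\) be invariant with \(\mu(\{0^\infty\})=0\), and let \(\nu\) be its induced measure from \cref{lem:kac-abramov}. Then \(m=\int\tau\,\dd\nu=1/\mu(E)<\infty\). Let \(q\) be the law of \(y_0\) under \(\nu\), so \(\sum_n q_n(n+1)=m<\infty\). Subadditivity of entropy against the generating partition by the first symbol gives \(h_\nu(\sigma)\le H(q)\), with equality if and only if \(\nu\) is Bernoulli. This is the standard fact \(h_\nu\le H(\alpha)\), with equality exactly when the partitions \(\sigma^{-k}\alpha\) are independent; here \(H(q)<\infty\) by \cref{lem:finite-entropy}. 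Combining this with \cref{lem:kac-abramov} and \eqref{eq:gibbs-inequality} gives
\[
h_\mu+\int\phi\,\dd\mu=\frac{h_\nu+\sum_n q_n\psi(n)}{m}\le\frac{H(q)+\sum_n q_n\psi(n)}{m}\le P.
\]
If equality holds throughout, then \(q=p\) and \(\nu\) is Bernoulli, so \(\nu=\nu_\phi\). Inducing followed by lifting recovers \(\mu\), so \(\mu=L(\nu_\phi)=\mu_\phi\). Verifying this last identity is the routine tower identity, read off from \eqref{eq:induced-integral} against the definition \eqref{eq:lift}.

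\emph{Step 3: general measures.} Write \(a=\mu(\{0^\infty\})\). If \(a=1\), then \(\mu=\delta_{0^\infty}\) and its free energy is \(\phi(0^\infty)<P\) by \eqref{eq:pressure-separation}. Otherwise \(\beta=1-a>0\), and \cref{lem:atom-splitting} applies: \eqref{eq:defect-splitting} gives
\[
\Delta_\phi(\mu)=a\bigl(P-\phi(0^\infty)\bigr)+\beta\Delta_\phi(\widetilde\mu).
\]
Both terms are nonnegative, using Step 2 for the second. Hence \(\Delta_\phi(\mu)\ge0\). If \(\Delta_\phi(\mu)=0\), the pressure separation forces \(a=0\), and then Step 2 gives \(\widetilde\mu=\mu=\mu_\phi\). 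So the supremum of the free energy is \(P\), it is attained only at \(\mu_\phi\), and the variational principle identifies \(P\) with the topological pressure.

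The main obstacle is the equality case in Step 2. This means justifying \(h_\nu\le H(q)\) with equality only for Bernoulli measures on a countable alphabet, which needs \(H(q)<\infty\) (supplied by \cref{lem:finite-entropy}) and the fact that the first-symbol partition generates. I would handle it by writing \(h_\nu=\lim_k H(\alpha\mid\bigvee_{i=1}^{k}\sigma^{-i}\alpha)\). Equality \(h_\nu=H(\alpha)\) then forces \(\alpha\) to be independent of the future \(\sigma\)-algebra, and iterating this independence shows that \(\nu\) is a product measure.
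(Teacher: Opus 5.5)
Your proposal is correct and follows essentially the same route as the paper. The paper also gets the upper bound by inducing together with $h_\nu\le H(q)$ and the Gibbs inequality, shows attainment by the lifted Bernoulli measure, uses atom splitting with $P>\phi(0^\infty)$ to exclude mass at $0^\infty$, and derives uniqueness from the equality case $h_\nu=H(q)$, which forces $\nu$ to be Bernoulli. The only cosmetic difference is that you condition the first symbol on the future, while the paper conditions $y_k$ on the past; these are equivalent by stationarity.
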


\begin{proof}
First consider an invariant measure \(\eta\in\M_\sigma(\X)\) with \(\eta(\{0^\infty\})=0\).  Let \(\nu\in\M_\sigma(\Y)\) be its induced measure, and put
\[
m=\int\tau\,\dd\nu.
\]
By \cref{lem:kac-abramov},
\[
h_\eta(\sigma)+\int\phi\,\dd\eta
=\frac1m\left(h_\nu(\sigma)+\int\psi(y_0)\,\dd\nu\right).
\]
Let \(q\) be the one-coordinate marginal of \(\nu\).  Then \(\sum_nq_n(n+1)=m<\infty\).  Since entropy rate is bounded above by one-symbol entropy,
\[
h_\nu(\sigma)\le H(q),
\]
and \cref{lem:finite-entropy} gives
\[
H(q)+\int\psi(y_0)\,\dd\nu
\le Pm.
\]
Therefore
\begin{equation}\label{eq:free-energy-upper-no-atom}
h_\eta(\sigma)+\int\phi\,\dd\eta\le P.
\end{equation}

Now let \(\mu\in\M_\sigma(\X)\) be arbitrary.  If \(\mu=\delta_{0^\infty}\), then
\[
h_\mu(\sigma)+\int\phi\,\dd\mu=\phi(0^\infty)<P.
\]
Otherwise, apply \cref{lem:atom-splitting}.  Writing \(\mu=a\delta_{0^\infty}+\beta\widetilde\mu\), \(\beta>0\), we have
\[
\begin{aligned}
h_\mu(\sigma)+\int\phi\,\dd\mu
&=a\phi(0^\infty)+\beta\left(h_{\widetilde\mu}(\sigma)+\int\phi\,\dd\widetilde\mu\right)\\
&\le a\phi(0^\infty)+\beta P\le P.
\end{aligned}
\]
Thus every invariant measure has free energy at most \(P\).

For the lifted Bernoulli measure \(\mu_\phi=L(\nu_\phi)\), \cref{lem:kac-abramov} applies and
\[
h_{\nu_\phi}(\sigma)=H(p).
\]
Using \(\log p_n=\psi(n)-P(n+1)\), we get
\[
H(p)+\sum_{n\in S}p_n\psi(n)=P\sum_{n\in S}p_n(n+1)=P\bar\tau_\phi.
\]
Hence
\[
h_{\mu_\phi}(\sigma)+\int\phi\,\dd\mu_\phi=P.
\]
The variational principle now gives \(P_{\mathrm{top}}(\X,\sigma,\phi)=P\), and \(\mu_\phi\) is an equilibrium state.

It remains to prove uniqueness.  Let \(\mu\) be an equilibrium state, and set \(a=\mu(\{0^\infty\})\).  The atom-splitting formula and \(P>\phi(0^\infty)\) force \(a=0\), so \(\mu(\{0^\infty\})=0\).  Let \(\nu\) be the induced measure and let \(q\) be its one-coordinate marginal.  Equality in the preceding upper bound implies both
\[
q=p
\]
and
\[
h_\nu(\sigma)=H(q).
\]
We spell out the last consequence.  Since \(H(q)<\infty\), the entropy rate of the stationary coordinate process satisfies
\[
h_\nu(\sigma)=\lim_{k\to\infty}H_\nu(y_k\mid y_0,\ldots,y_{k-1}),
\]
and the conditional entropies form a non-increasing sequence bounded above by \(H(q)\); see \cite{walters1982,gray2011}.  If the limit is \(H(q)\), then every conditional entropy equals \(H(q)\).  Hence \(y_k\) is independent of \((y_0,\ldots,y_{k-1})\) for every \(k\), and all finite-dimensional distributions factor.  Thus \(\nu\) is the Bernoulli measure with marginal \(p\), i.e. \(\nu=\nu_\phi\).  Therefore \(\mu=L(\nu)=L(\nu_\phi)=\mu_\phi\).\end{proof}

\begin{corollary}\label{cor:block-weights}
For every \(n\in S\),
\[
\mu_\phi([10^n1])=\mu_\phi([1])e^{\psi(n)-P(n+1)}.
\]
Moreover,
\[
\mu_\phi([1])^{-1}=\sum_{n\in S}(n+1)e^{\psi(n)-P(n+1)}.
\]
\end{corollary}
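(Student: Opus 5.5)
The plan is to read both identities directly off the lift formula \eqref{eq:lift}, using the Kac formula from \cref{lem:kac-abramov} for the normalization. Since \(\mu_\phi=L(\nu_\phi)\) with \(m=\bar\tau_\phi=\int\tau\,\dd\nu_\phi\), \cref{lem:kac-abramov} gives
\[
\mu_\phi([1])=\frac{1}{\bar\tau_\phi}=\Bigl(\sum_{n\in S}(n+1)p_n\Bigr)^{-1}.
\]
Substituting \(p_n=e^{\psi(n)-P(n+1)}\) yields the second claim immediately. As a sanity check, one can also apply \eqref{eq:lift} with \(f=\1_{[1]}\). Along a return block \(\pi(y),\sigma\pi(y),\dots,\sigma^{\tau(y)-1}\pi(y)\), only the initial point \(\pi(y)\) begins with \(1\). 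Hence \(F_{\1_{[1]}}\equiv1\), and \(\mu_\phi([1])=1/\bar\tau_\phi\) again.

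For the first identity, apply \eqref{eq:lift} with \(f=\1_{[10^n1]}\). The key combinatorial step is to identify which points of a return block lie in \([10^n1]\).
\begin{itemize}
\item For \(1\le j\le\tau(y)-1\), the point \(\sigma^j\pi(y)\) begins with \(0\), so it is not in \([10^n1]\).
\item The point \(\pi(y)=10^{y_0}10^{y_1}\cdots\) lies in \([10^n1]\) if and only if \(y_0=n\).
\end{itemize}
Thus \(F_f(y)=\1_{\{y_0=n\}}\), and therefore
\[
\mu_\phi([10^n1])=\frac{1}{\bar\tau_\phi}\,\nu_\phi(\{y_0=n\})=\frac{p_n}{\bar\tau_\phi}=\mu_\phi([1])\,e^{\psi(n)-P(n+1)}.
\]

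There is no real obstacle here. The only point needing care is the combinatorial claim that, within a first-return block, only the base point can begin with the symbol \(1\). This holds because \(\pi(y)\) starts with \(10^{y_0}1\) and \(\tau(y)=y_0+1\), so the next \(1\) sits exactly at the start of the following block. Finiteness of \(\bar\tau_\phi\), needed for the lift to be defined, is already provided by \cref{lem:psi-tail} under \eqref{eq:pressure-separation}.
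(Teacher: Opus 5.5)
Your proposal is correct and is essentially the paper's argument: Kac's formula gives \(\mu_\phi([1])^{-1}=\bar\tau_\phi=\sum_{n\in S}(n+1)p_n\), and the first identity comes from observing that within a return block only the base point \(\pi(y)\) can lie in \([10^n1]\), which happens exactly when \(y_0=n\). You make explicit, through the tower formula \eqref{eq:lift} with \(F_f=\1_{\{y_0=n\}}\), what the paper compresses into the remark that the induced measure of \([10^n1]\) is \(p_n\).
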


\begin{proof}
By Kac's formula for \(\mu_\phi=L(\nu_\phi)\),
\[
\mu_\phi([1])^{-1}=\bar\tau_\phi.
\]
On \([1]\), the event that the first return block has gap \(n\) is exactly \([10^n1]\), and its induced measure is \(p_n\).\end{proof}

\section{The pressure-gap estimate on the induced shift}\label{sec:induced-gap}

For \(\nu\in\M_\sigma(\Y)\) with \(\int\tau\,\dd\nu<\infty\), define the induced pressure defect by
\begin{equation}\label{eq:induced-defect}
\Delta_Y(\nu)=P\int\tau\,\dd\nu-h_\nu(\sigma)-\int\psi(y_0)\,\dd\nu.
\end{equation}
By the proof of \cref{prop:equilibrium}, \(\Delta_Y(\nu)\ge0\).

We use the convention
\[
\TV(q,p)=\frac12\sum_{n\in S}|q_n-p_n|.
\]
Pinsker's inequality says
\[
\TV(q,p)\le \sqrt{\frac12D(q\|p)}.
\]

\begin{theorem}\label{thm:induced-gap}
For every \(H\in C^\theta(\Y)\) and every \(\nu\in\M_\sigma(\Y)\) with \(\int\tau\,\dd\nu<\infty\),
\[
\left|\int H\,\dd\nu-\int H\,\dd\nu_\phi\right|
\le
\frac{|H|_\theta}{\sqrt2(1-\theta)}\sqrt{\Delta_Y(\nu)}.
\]
\end{theorem}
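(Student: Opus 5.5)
The plan is to telescope the difference $\int H\,\dd\nu-\int H\,\dd\nu_\phi$ along the coordinates, using the product structure of the Bernoulli measure $\nu_\phi$. Each telescoping increment is controlled by a one-step conditional relative entropy with respect to $p$, via Pinsker's inequality. Each such conditional relative entropy is bounded by $\Delta_Y(\nu)$.

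\emph{Step 1: the approximating functions.} For $k\ge0$ set
\[
H_k(y):=\int H(y_0\cdots y_{k-1}z)\,\dd\nu_\phi(z).
\]
Then $H_0\equiv\int H\,\dd\nu_\phi$. Since $t(y,\,y_0\cdots y_{k-1}z)\ge k$, we have $\norm{H_k-H}_\infty\le|H|_\theta\theta^k$, so $\int H_k\,\dd\nu\to\int H\,\dd\nu$. Because $\nu_\phi$ is Bernoulli with marginal $p$,
\[
H_k(y)=\sum_{a\in S}p_a\,H_{k+1}(y_0\cdots y_{k-1}a\cdots).
\]
Here $H_{k+1}$ depends only on its first $k+1$ coordinates. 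Writing $r^{(k)}_a(y_{<k})=\nu(y_k=a\mid y_0,\dots,y_{k-1})$, we obtain
\[
\int(H_{k+1}-H_k)\,\dd\nu=\int\sum_{a}H_{k+1}(y_{<k}a)\bigl(r^{(k)}_a-p_a\bigr)\,\dd\nu .
\]
As $a$ varies, $H_{k+1}(y_{<k}a)$ oscillates by at most $|H|_\theta\theta^k$. The elementary bound $|\sum_a g(a)(r_a-p_a)|\le\operatorname{osc}(g)\,\TV(r,p)$ then gives
\[
\Bigl|\int(H_{k+1}-H_k)\,\dd\nu\Bigr|\le|H|_\theta\theta^k\int\TV(r^{(k)},p)\,\dd\nu .
\]

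\emph{Step 2: Pinsker and Jensen.} Applying Pinsker's inequality pointwise and then Jensen's inequality (concavity of the square root) gives
\[
\int\TV(r^{(k)},p)\,\dd\nu\le\sqrt{\tfrac12\,D_k},
\qquad D_k:=\int D(r^{(k)}\|p)\,\dd\nu .
\]
We expand $D_k$ using stationarity of $\nu$, which makes each $y_k$ have marginal $q$. Writing $\log p_n=\psi(n)-P(n+1)$, this yields
\[
D_k=-H_\nu(y_k\mid y_0,\dots,y_{k-1})-\sum_nq_n\psi(n)+P\sum_nq_n(n+1).
\]
All terms are finite by \cref{lem:finite-entropy}, since $\sum_nq_n(n+1)=\int\tau\,\dd\nu<\infty$.

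The conditional entropies are non-increasing in $k$ and converge to $h_\nu(\sigma)$, as already used in the proof of \cref{prop:equilibrium}. Hence $H_\nu(y_k\mid y_{<k})\ge h_\nu(\sigma)$, and therefore $D_k\le\Delta_Y(\nu)$ for every $k$, by \eqref{eq:induced-defect}.

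\emph{Step 3: summation.} Telescoping from $H_0$ gives
\[
\Bigl|\int H\,\dd\nu-\int H\,\dd\nu_\phi\Bigr|\le\sum_{k\ge0}|H|_\theta\theta^k\sqrt{\tfrac12\Delta_Y(\nu)}=\frac{|H|_\theta}{\sqrt2(1-\theta)}\sqrt{\Delta_Y(\nu)} .
\]

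\emph{Main obstacle.} The key step is the identification and bound $D_k\le\Delta_Y(\nu)$ on a countable alphabet. One must check that the conditional relative entropy decomposes as stated, with no $\infty-\infty$ issues. This is where finiteness of $H(q)$ and of $\sum_n q_n|\psi(n)|$ from \cref{lem:finite-entropy} is essential. One also needs monotone convergence of the conditional entropies to $h_\nu(\sigma)$ under $H(q)<\infty$, which I would cite from the standard theory, as in \cref{prop:equilibrium}. A minor point is measurability: the conditional distributions $r^{(k)}$ should be taken as regular versions, which is harmless since they are determined by the countably many cylinder probabilities.
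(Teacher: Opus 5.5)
Your proposal is correct and is essentially the paper's own proof. It uses the same approximants \(T_nH\) obtained by integrating out the tail against the Bernoulli measure \(\nu_\phi\), the same oscillation/total-variation bound on each telescoping increment, and Pinsker followed by Jensen (the paper says Cauchy--Schwarz); it then bounds the averaged conditional relative entropy by \(\Delta_Y(\nu)\) in the same way, applying \cref{lem:finite-entropy} conditionally and using that the conditional entropies decrease to \(h_\nu(\sigma)\).
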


\begin{proof}
For \(n\ge0\), define
\[
(T_nH)(y)=\int H(y_0,\ldots,y_{n-1},z_0,z_1,\ldots)\,\dd\nu_\phi(z),
\]
where for \(n=0\) the list \(y_0,\ldots,y_{n-1}\) is empty. Thus \(T_0H=\int H\,\dd\nu_\phi\). Since two points which agree in their first \(n\) coordinates are at distance at most \(\theta^n\), we have \(T_nH\to H\) uniformly. Hence
\[
\int H\,\dd\nu-\int H\,\dd\nu_\phi
=
\sum_{n=0}^\infty\int (T_{n+1}H-T_nH)\,\dd\nu,
\]
and the series is absolutely convergent.

Fix \(n\ge0\). Let \(\mathcal F_n\) be the sigma-algebra generated by \(y_0,\ldots,y_{n-1}\), with \(\mathcal F_0\) trivial. Let \(Q_n(\cdot\mid y_0,\ldots,y_{n-1})\) be a regular conditional distribution of \(y_n\) with respect to \(\mathcal F_n\). For a prefix \(a=(a_0,\ldots,a_{n-1})\) and \(b\in S\), put
\[
\Phi_n(a,b)=\int H(a_0,\ldots,a_{n-1},b,z_0,z_1,\ldots)\,\dd\nu_\phi(z).
\]
Then
\[
\begin{aligned}
\int (T_{n+1}H-T_nH)\,\dd\nu
&=\int \sum_{b\in S}\Phi_n(y_0,\ldots,y_{n-1},b) \\
&\qquad\qquad\cdot\bigl(Q_n(b\mid y_0,\ldots,y_{n-1})-p_b\bigr)\,\dd\nu.
\end{aligned}
\]
For each fixed prefix \(a\), the function \(b\mapsto\Phi_n(a,b)\) has oscillation at most \(|H|_\theta\theta^n\). Since \(Q_n(\cdot\mid a)-p\) has total mass zero,
\[
\left|\sum_{b\in S}\Phi_n(a,b)\bigl(Q_n(b\mid a)-p_b\bigr)\right|
\le |H|_\theta\theta^n \TV(Q_n(\cdot\mid a),p).
\]
Pinsker's inequality and Cauchy--Schwarz therefore give
\[
\left|\int (T_{n+1}H-T_nH)\,\dd\nu\right|
\le
\frac{|H|_\theta\theta^n}{\sqrt2}
\left(\int D(Q_n(\cdot\mid y_0,\ldots,y_{n-1})\|p)\,\dd\nu\right)^{1/2}.
\]

It remains to bound the averaged conditional relative entropy. Since \(\int \tau\,\dd\nu<\infty\), the conditional first moment of \(Q_n(\cdot\mid y_0,\ldots,y_{n-1})\) is finite for \(\nu\)-almost every prefix. Applying \cref{lem:finite-entropy} conditionally and integrating gives
\[
\begin{aligned}
\int D(Q_n(\cdot\mid y_0,\ldots,y_{n-1})\|p)\,\dd\nu
&=P\int \tau(y_n)\,\dd\nu
-H_\nu(y_n\mid \mathcal F_n)\\
&\qquad-\int\psi(y_n)\,\dd\nu.
\end{aligned}
\]
By stationarity, \(\int \tau(y_n)\,\dd\nu=\int\tau\,\dd\nu\) and \(\int\psi(y_n)\,\dd\nu=\int\psi(y_0)\,\dd\nu\). Also
\[
H_\nu(y_n\mid \mathcal F_n)\ge h_\nu(\sigma),
\]
because the conditional entropies decrease to the entropy rate; see \cite{walters1982,gray2011}. Therefore
\[
\int D(Q_n(\cdot\mid y_0,\ldots,y_{n-1})\|p)\,\dd\nu
\le \Delta_Y(\nu).
\]
Consequently,
\[
\left|\int (T_{n+1}H-T_nH)\,\dd\nu\right|
\le
\frac{|H|_\theta\theta^n}{\sqrt2}\sqrt{\Delta_Y(\nu)}.
\]
Summing over \(n\ge0\) proves the theorem.\end{proof}

\section{Transfer back to the \texorpdfstring{\(S\)}{S}-gap shift}\label{sec:transfer}

From now on, write
\[
\Delta_\phi(\mu)=P-\left(h_\mu(\sigma)+\int\phi\,\dd\mu\right),
\qquad \mu\in\M_\sigma(\X).
\]
By \cref{prop:equilibrium}, \(\Delta_\phi(\mu)\ge0\).

\subsection{The bounded remainder of an induced observable}

For \(f\in C^\theta(\X)\), set
\[
c_f=f(0^\infty),
\qquad
G_f(y)=F_f(y)-c_f\tau(y).
\]

\begin{lemma}\label{lem:Gf}
There is a constant \(C_\theta>0\), depending only on \(\theta\), such that for every \(f\in C^\theta(\X)\),
\[
\norm{G_f}_\infty\le C_\theta |f|_\theta,
\qquad
|G_f|_\theta\le C_\theta |f|_\theta.
\]
In particular, \(\norm{G_f}_\theta\le C_\theta\norm{f}_\theta\).
\end{lemma}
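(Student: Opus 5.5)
We estimate \(G_f\) term by term along the return block. Writing
\[
G_f(y)=\sum_{j=0}^{\tau(y)-1}\bigl(f(\sigma^j\pi(y))-f(0^\infty)\bigr),
\]
we compare each orbit point with \(0^\infty\), exactly as in the proof of \cref{lem:psi-tail}.

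\textbf{Sup-norm bound.} For \(j=0\) the point \(\pi(y)\) begins with \(1\), so \(d_\theta(\pi(y),0^\infty)=1\) and the term is at most \(|f|_\theta\). For \(1\le j\le y_0\), the point \(\sigma^j\pi(y)\) begins with \(y_0+1-j\) zeros, so
\[
d_\theta(\sigma^j\pi(y),0^\infty)\le\theta^{y_0+1-j}.
\]
Summing gives
\[
\norm{G_f}_\infty\le|f|_\theta\Bigl(1+\sum_{k\ge1}\theta^k\Bigr)=\frac{|f|_\theta}{1-\theta}.
\]

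\textbf{H\"older seminorm.} Take \(y\ne z\) in \(\Y\) with \(t(y,z)=m\).

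If \(m=0\), then \(d_\theta(y,z)=1\), and the sup bound gives
\[
|G_f(y)-G_f(z)|\le \frac{2|f|_\theta}{1-\theta}.
\]

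If \(m\ge1\), then \(y_0=z_0=n\), so both sums have the same length and we may compare termwise. For \(0\le j\le n\), the points \(\sigma^j\pi(y)\) and \(\sigma^j\pi(z)\) share the word \(0^{n-j}10^{y_1}1\cdots 10^{y_{m-1}}1\), which comes from the common coordinates \(y_1,\dots,y_{m-1}\). The agreement length is therefore
\[
t\bigl(\sigma^j\pi(y),\sigma^j\pi(z)\bigr)\ge (n-j)+1+\sum_{i=1}^{m-1}(y_i+1)\ge (n-j)+m,
\]
using \(y_i+1\ge1\). Hence
\[
|G_f(y)-G_f(z)|\le|f|_\theta\sum_{j=0}^{n}\theta^{n-j+m}\le\frac{|f|_\theta}{1-\theta}\,\theta^m.
\]

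\textbf{Conclusion.} Both cases give \(|G_f|_\theta\le 2|f|_\theta/(1-\theta)\), so we can take \(C_\theta=2/(1-\theta)\). Since \(|f|_\theta\le\norm{f}_\theta\), this yields \(\norm{G_f}_\theta\le 2C_\theta\norm{f}_\theta\); enlarging \(C_\theta\) if necessary gives the final claim as stated.

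The main obstacle is the index bookkeeping in the case \(m\ge1\). One must check that the shared prefix of \(\sigma^j\pi(y)\) and \(\sigma^j\pi(z)\) has length at least \(m\) plus the remaining zero run \(n-j\). This is precisely what makes the unbounded number of terms summable geometrically. The key inequality is \(y_i+1\ge1\), which holds even when \(0\in S\).
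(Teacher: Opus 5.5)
Your proof is correct and follows the paper's argument. You bound the sup norm by comparing each orbit point with \(0^\infty\), bound the \(m\ge1\) case by comparing \(\sigma^j\pi(y)\) with \(\sigma^j\pi(z)\) term by term via the common prefix coming from \(y_0,\dots,y_{m-1}\), and fall back on the sup bound when \(m=0\); your explicit enlargement of \(C_\theta\) to get \(\norm{G_f}_\theta\le C_\theta\norm{f}_\theta\) is slightly more careful than the paper's. One harmless slip: \(\sigma^j\pi(y)\) begins with \(0^{n+1-j}1\) for \(1\le j\le n\) and with \(10^n1\) for \(j=0\), not with \(0^{n-j}1\), but your agreement length is one less than what the common coordinates guarantee, so it is still a valid lower bound and the estimate is unaffected.
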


\begin{proof}
Let \(r=\tau(y)=y_0+1\).  The point \(\pi(y)\) may differ from \(0^\infty\) at the first coordinate, and for \(1\le j\le r-1\), the point \(\sigma^j\pi(y)\) begins with \(r-j\) zeros.  Hence
\[
|G_f(y)|
\le |f|_\theta\left(1+\sum_{j=1}^{r-1}\theta^{r-j}\right)
\le \frac{|f|_\theta}{1-\theta}.
\]

Now let \(y,z\in\Y\) and suppose that \(t(y,z)=n\ge1\).  Then \(y_i=z_i\) for \(0\le i<n\).  In particular \(\tau(y)=\tau(z)\), and \(\pi(y)\) and \(\pi(z)\) agree for at least
\[
L_n=\sum_{i=0}^{n-1}(y_i+1)\ge n
\]
coordinates.  For \(0\le j<\tau(y)\),
\[
|f(\sigma^j\pi(y))-f(\sigma^j\pi(z))|
\le |f|_\theta\theta^{L_n-j}.
\]
Since \(L_n\ge \tau(y)+n-1\), summing in \(j\) yields
\[
|G_f(y)-G_f(z)|
\le \frac{|f|_\theta}{1-\theta}\theta^n.
\]
The case \(n=0\) follows from the sup norm bound.\end{proof}

If \(\mu(\{0^\infty\})=0\), let \(\nu\) be its induced measure and put \(m=\int\tau\,\dd\nu\).  The inducing formulas give
\begin{equation}\label{eq:f-ratio}
\int f\,\dd\mu
=f(0^\infty)+\frac1m\int G_f\,\dd\nu.
\end{equation}
Similarly,
\begin{equation}\label{eq:f-ratio-equilibrium}
\int f\,\dd\mu_\phi
=f(0^\infty)+\frac1{\bar\tau_\phi}\int G_f\,\dd\nu_\phi.
\end{equation}

\subsection{Defect identity for measures without the fixed-point atom}

\begin{lemma}\label{lem:defect-identity}
Let \(\mu\in\M_\sigma(\X)\) satisfy \(\mu(\{0^\infty\})=0\), and let \(\nu\) be its induced measure.  If \(m=\int\tau\,\dd\nu\), then
\[
\Delta_Y(\nu)=m\Delta_\phi(\mu).
\]
\end{lemma}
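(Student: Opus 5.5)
This identity should be a direct algebraic consequence of the Kac--Abramov formulas in \cref{lem:kac-abramov}, applied to \(\mu\) and its induced measure \(\nu\). The plan is to express each term of \(\Delta_\phi(\mu)\) through induced quantities, multiply by \(m\), and compare with the definition \eqref{eq:induced-defect} of \(\Delta_Y(\nu)\).

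First, since \(\mu(\{0^\infty\})=0\), \cref{lem:kac-abramov} gives \(m=1/\mu(E)<\infty\), so \(\Delta_Y(\nu)\) is defined. The same lemma gives \(h_\mu(\sigma)=h_\nu(\sigma)/m\).

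Next, apply \eqref{eq:induced-integral} with \(f=\phi\), which is bounded and continuous. The induced function \(F_\phi\) is \(\Phi_\phi\), and by the renewal-type hypothesis (\cref{def:renewal-type}) this equals \(\psi(y_0)\). Hence
\[
\int\phi\,\dd\mu=\frac1m\int\psi(y_0)\,\dd\nu .
\]
This integral is finite: \(|\psi(n)|\le (n+1)\norm{\phi}_\infty\), so \(\int|\psi(y_0)|\,\dd\nu\le \norm{\phi}_\infty m<\infty\). This is the same bound used in \cref{lem:finite-entropy}.

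Combining these, and using \(P=\frac1m\,P\int\tau\,\dd\nu\), we get
\[
m\Delta_\phi(\mu)=mP-h_\nu(\sigma)-\int\psi(y_0)\,\dd\nu
=P\int\tau\,\dd\nu-h_\nu(\sigma)-\int\psi(y_0)\,\dd\nu=\Delta_Y(\nu).
\]

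The only point needing care is that the arithmetic involves no expression of the form \(\infty-\infty\). The entropy \(h_\nu(\sigma)\) is finite, because \(h_\nu(\sigma)\le H(q)\) for the marginal \(q\), and \(H(q)<\infty\) by \cref{lem:finite-entropy} since \(q\) has finite first moment \(m\). Equivalently, \(h_\mu(\sigma)\le\log 2\), and then \(h_\nu(\sigma)=m h_\mu(\sigma)\) is finite. The integrals of \(\tau\) and \(\psi(y_0)\) were shown finite above. So every step is a legitimate identity of real numbers, and no genuine obstacle arises.
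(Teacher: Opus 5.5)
Your proposal is correct and follows the paper's proof: both substitute the Kac--Abramov identities \(h_\mu(\sigma)=h_\nu(\sigma)/m\) and \(\int\phi\,\dd\mu=\frac1m\int\psi(y_0)\,\dd\nu\) into \(\Delta_\phi(\mu)\) and multiply by \(m\). Your extra finiteness checks on \(h_\nu(\sigma)\) and \(\int|\psi(y_0)|\,\dd\nu\) are valid and make explicit what the paper leaves implicit.
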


\begin{proof}
By \cref{lem:kac-abramov},
\[
h_\mu(\sigma)=\frac{h_\nu(\sigma)}m,
\qquad
\int\phi\,\dd\mu=\frac1m\int\psi(y_0)\,\dd\nu.
\]
Multiplying \(\Delta_\phi(\mu)\) by \(m\) gives exactly \(\Delta_Y(\nu)\).\end{proof}

\subsection{Controlling the return-time average}

We need one elementary estimate on the original system.  Let
\[
B_\phi=\frac{|\phi|_\theta}{1-\theta}.
\]

\begin{lemma}\label{lem:phi-control}
For every \(\mu\in\M_\sigma(\X)\),
\[
\int\phi\,\dd\mu
\le
\phi(0^\infty)+B_\phi\mu([1]).
\]
\end{lemma}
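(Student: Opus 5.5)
The plan is to prove the inequality pointwise and then integrate, using only the H\"older continuity of \(\phi\) and \(\sigma\)-invariance of \(\mu\). The renewal-type structure and the inducing machinery are not needed. For \(x\in\X\setminus\{0^\infty\}\), let \(k(x)\ge0\) be the number of leading zeros of \(x\), so that \(x\in[0^{k}1]\) with \(k=k(x)\).

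\emph{Step 1 (pointwise bound).} Since \(x\) and \(0^\infty\) agree in their first \(k(x)\) coordinates, \(t(x,0^\infty)=k(x)\). Hence
\[
\phi(x)\le\phi(0^\infty)+|\phi|_\theta\,\theta^{k(x)}
\]
for every \(x\neq 0^\infty\). At \(x=0^\infty\) the difference \(\phi(x)-\phi(0^\infty)\) is zero. Writing \(\X\setminus\{0^\infty\}\) as the disjoint union of the cylinders \([0^k1]\), \(k\ge0\), we therefore get
\[
\int\phi\,\dd\mu\le \phi(0^\infty)+|\phi|_\theta\sum_{k\ge0}\theta^k\mu([0^k1]).
\]
This uses \(\mu(\X)=1\) to absorb the constant term.

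\emph{Step 2 (invariance).} For each \(k\ge0\) we have \([0^k1]\subset\sigma^{-k}[1]\). By \(\sigma\)-invariance, \(\mu([0^k1])\le\mu(\sigma^{-k}[1])=\mu([1])\). Summing the geometric series gives
\[
\sum_{k\ge0}\theta^k\mu([0^k1])\le\frac{\mu([1])}{1-\theta}.
\]
Combining this with Step 1 yields \(\int\phi\,\dd\mu\le\phi(0^\infty)+B_\phi\,\mu([1])\).

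There is no real obstacle here. The only point to check is that the bound holds uniformly, including for measures with an atom at \(0^\infty\). The pointwise argument handles that case automatically, since the atom contributes exactly \(\phi(0^\infty)\) times its mass.

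An alternative route would split off the atom via \cref{lem:atom-splitting} and then apply \cref{lem:kac-abramov} together with the tail bound \(\psi(n)\le(n+1)\phi(0^\infty)+B_\phi\) from \cref{lem:psi-tail}. This route gives \(\int\phi\,\dd\widetilde\mu\le\phi(0^\infty)+B_\phi/m=\phi(0^\infty)+B_\phi\widetilde\mu([1])\), and it uses \(\delta_{0^\infty}([1])=0\). I would keep the direct argument, since it is shorter and does not use the renewal-type hypothesis.
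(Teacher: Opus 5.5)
Your proof is correct and is essentially the paper's argument. The paper writes \(N(x)=\min\{k\ge0:x_k=1\}\) for your \(k(x)\), proves the same pointwise H\"older bound \(\phi(x)\le\phi(0^\infty)+|\phi|_\theta\theta^{N(x)}\), and then uses \(\{N=k\}\subset\sigma^{-k}[1]\) with \(\sigma\)-invariance to sum the geometric series to \(\mu([1])/(1-\theta)\).
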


\begin{proof}
Let \(N(x)=\min\{k\ge0:x_k=1\}\), with \(N(0^\infty)=\infty\).  Then
\[
d_\theta(x,0^\infty)\le \theta^{N(x)},
\]
where \(\theta^\infty=0\).  Hence
\[
\phi(x)\le \phi(0^\infty)+|\phi|_\theta\theta^{N(x)}.
\]
Since \(\{N=k\}\subset\{x_k=1\}=\sigma^{-k}[1]\), invariance gives
\[
\int\theta^{N(x)}\,\dd\mu(x)
\le \sum_{k\ge0}\theta^k\mu(\sigma^{-k}[1])
=\frac{\mu([1])}{1-\theta}.
\]
The result follows.\end{proof}

\begin{lemma}\label{lem:roof-control}
There are constants \(\delta_0>0\), \(C_\tau>0\), and \(M_0>0\), depending only on \(S,\phi\) and \(\theta\), with the following property.  If \(\mu\in\M_\sigma(\X)\), \(\mu(\{0^\infty\})=0\), and \(\Delta_\phi(\mu)\le\delta_0\), then for the induced measure \(\nu\) of \(\mu\),
\[
\left|\int\tau\,\dd\nu-\bar\tau_\phi\right|
\le C_\tau\sqrt{\Delta_\phi(\mu)}
\]
and
\[
\int\tau\,\dd\nu\le M_0.
\]
\end{lemma}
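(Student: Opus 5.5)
The plan is to bound the return-time average through the one-symbol marginal \(q\) of \(\nu\), using relative entropy against the exponentially-tailed distribution \(p\) and a Donsker--Varadhan (Gibbs variational) inequality. Let \(q\) be the marginal of \(y_0\) under \(\nu\) and set \(m=\int\tau\,\dd\nu=\sum_n q_n(n+1)\). This is finite by \cref{lem:kac-abramov}, since \(m=\mu([1])^{-1}\).

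The first step is the inequality \(D(q\|p)\le\Delta_Y(\nu)\). By \eqref{eq:relative-entropy-identity},
\[
D(q\|p)=Pm-H(q)-\int\psi(y_0)\,\dd\nu .
\]
Since \(h_\nu(\sigma)\le H(q)\), this is at most \(\Delta_Y(\nu)\). By \cref{lem:defect-identity}, \(\Delta_Y(\nu)=m\Delta_\phi(\mu)\). Hence
\[
D(q\|p)\le m\,\Delta_\phi(\mu).
\]

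The next step is an a priori bound on \(m\). The Gibbs variational inequality says that \(\sum_n q_n g(n)\le D(q\|p)+\log\sum_n p_ne^{g(n)}\) for any \(g\) with \(\sum_n p_ne^{g(n)}<\infty\). Apply it with \(g(n)=\eta(n+1)\), where \(\eta>0\) comes from \cref{lem:psi-tail}, and put \(K=\log\sum_n p_ne^{\eta(n+1)}<\infty\). This gives
\[
\eta m\le m\Delta_\phi(\mu)+K .
\]
Choose \(\delta_0=\eta/2\). Whenever \(\Delta_\phi(\mu)\le\delta_0\), we get \(m\le 2K/\eta=:M_0\). One point needs checking: to absorb \(m\Delta_\phi(\mu)\) into the left side we need \(m<\infty\) in advance, and that is already known. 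As a consequence, \(D(q\|p)\le M_0\Delta_\phi(\mu)\).

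For the square-root estimate, apply the same variational inequality with \(g(n)=\lambda\bigl((n+1)-\bar\tau_\phi\bigr)\) for \(|\lambda|\le\eta/2\), of either sign. Because \(p\) has an exponential moment of order \(\eta\), the centred cumulant generating function satisfies
\[
\Lambda(\lambda)=\log\sum_n p_ne^{\lambda((n+1)-\bar\tau_\phi)}\le A\lambda^2
\]
for \(|\lambda|\le\eta/2\), with \(A\) depending only on \(p\), hence only on \(S,\phi,\theta\). To see this, use \(\Lambda(0)=\Lambda'(0)=0\) and bound \(\Lambda''\) uniformly on \([-\eta/2,\eta/2]\) by the second moment of \(p\) tilted by \(e^{\lambda(n+1)}\). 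That second moment is controlled by \(\sum_n p_n(n+1)^2e^{\eta(n+1)/2}\le C\sum_n p_ne^{\eta(n+1)}\). The two signs of \(\lambda\) then give
\[
|\lambda|\,|m-\bar\tau_\phi|\le D(q\|p)+A\lambda^2 .
\]
Write \(D=M_0\Delta_\phi(\mu)\) and take \(|\lambda|=\sqrt{D/A}\). This is admissible once \(\delta_0\) is shrunk so that \(M_0\delta_0/A\le\eta^2/4\). The result is
\[
|m-\bar\tau_\phi|\le 2\sqrt{A D}=2\sqrt{AM_0}\,\sqrt{\Delta_\phi(\mu)},
\]
so \(C_\tau=2\sqrt{AM_0}\).

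The main obstacle is the uniform quadratic bound \(\Lambda(\lambda)\le A\lambda^2\) on a fixed neighbourhood of \(0\). Everything there rests on the exponential tail \eqref{eq:p-exp-tail}, and therefore on the separation condition \(P>\phi(0^\infty)\). The other delicate point is the bootstrapping in the bound on \(m\), which is valid only because \(m<\infty\) is known beforehand. \cref{lem:phi-control} is not needed in this route, although it offers an alternative way to bound \(m\) directly from \(\mu([1])\).
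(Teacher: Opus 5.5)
Your proof is correct. Its second half is essentially the paper's argument: tilting \(p\) by \(e^{\lambda((n+1)-\bar\tau_\phi)}\), the quadratic bound \(\Lambda(\lambda)\le A\lambda^2\), and optimizing in \(\lambda\). The genuine difference is in the a priori bound on \(m=\int\tau\,\dd\nu\).

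The paper gets that bound on \(\X\). By \cref{lem:phi-control} and the binary entropy bound \(h_\mu(\sigma)\le H_{\mathrm{bin}}(\mu([1]))\), any measure with \(\mu([1])<p_*\) has free energy below \(\phi(0^\infty)+\gamma/2\). So \(\Delta_\phi(\mu)\le\gamma/2\) forces \(\mu([1])\ge p_*\), and Kac gives \(m\le p_*^{-1}\).

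You instead stay on \(\Y\) and bootstrap. The Gibbs variational inequality with \(g(n)=\eta(n+1)\), combined with \(D(q\|p)\le m\Delta_\phi(\mu)\), gives \((\eta-\Delta_\phi(\mu))\,m\le K\). This is legitimate, as you note, only because \(m=\mu([1])^{-1}<\infty\) is known in advance.

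\emph{Comparison.} Your route uses one tool, the tilting (Donsker--Varadhan) inequality, for both estimates. It makes \cref{lem:phi-control} unnecessary. It also gives fully explicit constants, \(M_0=2K/\eta\), \(\delta_0=\min\{\eta/2,\,A\eta^2/(4M_0)\}\) and \(C_\tau=2\sqrt{AM_0}\), in place of the implicitly defined \(p_*\). The paper's route does not need the exponential tail of \(p\) for this step, only \(\gamma=P-\phi(0^\infty)>0\) and \(|\phi|_\theta\). It also has a transparent meaning: small mass on \([1]\) makes \(\mu\) look like \(\delta_{0^\infty}\), whose free energy is separated from \(P\) by \(\gamma\).

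\emph{Two small details to fill in.}
\begin{enumerate}
\item When you bound \(\Lambda''\) by the tilted second moment, you also need a lower bound on the normaliser \(\sum_np_ne^{\lambda(n+1)}\). Jensen gives \(\sum_np_ne^{\lambda(n+1)}\ge e^{-\eta\bar\tau_\phi/2}\) for \(|\lambda|\le\eta/2\).
\item The choice \(|\lambda|=\sqrt{D/A}\) is unavailable when \(\Delta_\phi(\mu)=0\). In that case \(D(q\|p)=0\), so \(q=p\) and \(m=\bar\tau_\phi\) directly.
\end{enumerate}
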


\begin{proof}
Write
\[
m=\int\tau\,\dd\nu,
\qquad
p_\mu=\mu([1]).
\]
Since \(\mu(\{0^\infty\})=0\), Kac's formula gives \(m=1/p_\mu\).

We first show that \(m\) is uniformly bounded when the pressure defect is small.  Let
\[
\gamma=P-\phi(0^\infty)>0.
\]
For a stationary binary process with \(\mu([1])=p_\mu\),
\[
h_\mu(\sigma)\le H_{\mathrm{bin}}(p_\mu),
\]
where \(H_{\mathrm{bin}}(u)=-u\log u-(1-u)\log(1-u)\).  By \cref{lem:phi-control},
\[
h_\mu(\sigma)+\int\phi\,\dd\mu
\le
\phi(0^\infty)+H_{\mathrm{bin}}(p_\mu)+B_\phi p_\mu.
\]
Choose \(p_*>0\) so small that
\[
H_{\mathrm{bin}}(u)+B_\phi u<\frac\gamma2
\qquad (0\le u<p_*).
\]
If \(\Delta_\phi(\mu)\le\gamma/2\), then the free energy is at least \(\phi(0^\infty)+\gamma/2\), so necessarily \(p_\mu\ge p_*\).  Thus
\begin{equation}\label{eq:m-bound-small}
m\le M_*:=p_*^{-1}
\end{equation}
whenever \(\Delta_\phi(\mu)\le\gamma/2\).

Let \(q\) be the one-coordinate marginal of \(\nu\).  Then
\[
m=\sum_{n\in S}q_n(n+1).
\]
Since \(H(q)\ge h_\nu(\sigma)\), \cref{lem:finite-entropy} and \cref{lem:defect-identity} give
\begin{equation}\label{eq:marginal-D-bound}
D(q\|p)
\le
\Delta_Y(\nu)
=m\Delta_\phi(\mu).
\end{equation}
If \(\Delta_\phi(\mu)\le\gamma/2\), then \eqref{eq:m-bound-small} gives
\begin{equation}\label{eq:D-small-bound}
D(q\|p)
\le M_*\Delta_\phi(\mu).
\end{equation}

It remains to convert relative entropy into control of the first moment.  By \cref{lem:psi-tail}, there is \(\eta>0\) such that \(\sum_np_ne^{\eta(n+1)}<\infty\).  Define
\[
M(t)=\sum_{n\in S}p_n e^{t((n+1)-\bar\tau_\phi)},
\qquad
\Lambda(t)=\log M(t).
\]
Then \(M(t)<\infty\) for \(|t|\le \eta/2\), and \(M\) is twice continuously differentiable on this interval.  Since \(M(0)=1\) and \(M'(0)=0\), there are \(t_0>0\) and \(A>0\), depending only on \(S,\phi,\theta\), such that
\begin{equation}\label{eq:lambda-quadratic}
\Lambda(t)\le At^2
\qquad (|t|\le t_0).
\end{equation}
For \(|t|\le t_0\), set
\[
p_n^{(t)}=e^{-\Lambda(t)}p_n e^{t((n+1)-\bar\tau_\phi)}.
\]
Then \((p_n^{(t)})\) is a probability vector.  Since \(D(q\|p^{(t)})\ge0\),
\[
t(m-\bar\tau_\phi)
\le D(q\|p)+\Lambda(t)
\]
for \(0<t\le t_0\), and applying the same inequality with \(-t\) gives
\[
|m-\bar\tau_\phi|
\le \frac{D(q\|p)}t+At
\qquad (0<t\le t_0).
\]
Choose
\[
\delta_0=\min\left\{\frac\gamma2,\frac{t_0^2}{M_*}\right\}.
\]
If \(\Delta_\phi(\mu)\le\delta_0\), then \eqref{eq:D-small-bound} gives \(D(q\|p)\le t_0^2\).  Taking \(t=\sqrt{D(q\|p)}\) when \(D(q\|p)>0\), and arguing trivially when \(D(q\|p)=0\), we obtain
\[
|m-\bar\tau_\phi|
\le (1+A)\sqrt{D(q\|p)}
\le (1+A)\sqrt{M_*}\sqrt{\Delta_\phi(\mu)}.
\]
This proves the first estimate.  The second estimate follows from \eqref{eq:m-bound-small}; set \(M_0=M_*\).\end{proof}

\subsection{The estimate for measures without the fixed-point atom}

\begin{proposition}\label{prop:no-atom-effective}
There is a constant \(C_0>0\), depending only on \(S,\phi\) and \(\theta\), such that for every \(\mu\in\M_\sigma(\X)\) satisfying \(\mu(\{0^\infty\})=0\) and every \(f\in C^\theta(\X)\),
\[
\left|\int f\,\dd\mu-\int f\,\dd\mu_\phi\right|
\le
C_0\norm{f}_\theta\sqrt{\Delta_\phi(\mu)}.
\]
\end{proposition}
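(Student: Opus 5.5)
We split according to the size of the defect. If \(\Delta_\phi(\mu)>\delta_0\), with \(\delta_0\) from \cref{lem:roof-control}, the estimate is trivial: the left side is at most \(2\norm{f}_\infty\le 2\norm{f}_\theta\), which is at most \(2\delta_0^{-1/2}\norm{f}_\theta\sqrt{\Delta_\phi(\mu)}\). So we may assume \(\Delta_\phi(\mu)\le\delta_0\). In that regime \cref{lem:roof-control} gives \(m=\int\tau\,\dd\nu\le M_0\) and \(|m-\bar\tau_\phi|\le C_\tau\sqrt{\Delta_\phi(\mu)}\).

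Next we subtract \eqref{eq:f-ratio-equilibrium} from \eqref{eq:f-ratio}. The constant \(f(0^\infty)\) cancels, and we are left with
\[
\int f\,\dd\mu-\int f\,\dd\mu_\phi
=\frac1m\left(\int G_f\,\dd\nu-\int G_f\,\dd\nu_\phi\right)
+\left(\frac1m-\frac1{\bar\tau_\phi}\right)\int G_f\,\dd\nu_\phi .
\]
Both terms will be bounded by a multiple of \(\norm{f}_\theta\sqrt{\Delta_\phi(\mu)}\).

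For the first term, we use \(m\ge1\) since \(\tau\ge1\). By \cref{lem:Gf}, \(G_f\in C^\theta(\Y)\) with \(|G_f|_\theta\le C_\theta\norm{f}_\theta\). \Cref{thm:induced-gap} then bounds the bracket by \(\frac{C_\theta\norm{f}_\theta}{\sqrt2(1-\theta)}\sqrt{\Delta_Y(\nu)}\). \Cref{lem:defect-identity} gives \(\Delta_Y(\nu)=m\Delta_\phi(\mu)\le M_0\Delta_\phi(\mu)\). Hence the first term is at most \(\frac{C_\theta\sqrt{M_0}}{\sqrt2(1-\theta)}\norm{f}_\theta\sqrt{\Delta_\phi(\mu)}\).

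For the second term, we again use \(m\ge1\) and \(\bar\tau_\phi\ge1\), which give \(|1/m-1/\bar\tau_\phi|=|m-\bar\tau_\phi|/(m\bar\tau_\phi)\le C_\tau\sqrt{\Delta_\phi(\mu)}\). Also \(|\int G_f\,\dd\nu_\phi|\le\norm{G_f}_\infty\le C_\theta\norm{f}_\theta\). So the second term is at most \(C_\tau C_\theta\norm{f}_\theta\sqrt{\Delta_\phi(\mu)}\).

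Taking \(C_0\) to be the maximum of \(2\delta_0^{-1/2}\) and the sum of the two constants above finishes the argument. All of these constants depend only on \(S,\phi,\theta\). The only delicate point is the normalization mismatch \(1/m\) versus \(1/\bar\tau_\phi\): the induced estimate alone does not control it, and the hard work is already done in \cref{lem:roof-control}. What remains here is bookkeeping, in particular making sure that the unboundedness of \(\tau\) never enters, because \(f\) appears only through the bounded Hölder function \(G_f\).
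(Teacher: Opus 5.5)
Your proof is correct and follows the paper's argument essentially line by line. It has the same trivial large-defect case, and the same splitting of \(\frac1m\int G_f\,\dd\nu-\frac1{\bar\tau_\phi}\int G_f\,\dd\nu_\phi\) into two terms: a fluctuation term handled by \cref{thm:induced-gap} together with \cref{lem:defect-identity,lem:Gf}, and a normalization term handled by \cref{lem:roof-control}. As in the paper, the bound \(m\le M_0\) is not actually needed for the first term, since \(\frac1m\sqrt{m\Delta_\phi(\mu)}\le\sqrt{\Delta_\phi(\mu)}\) when \(m\ge1\).
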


\begin{proof}
Let \(\delta_0,C_\tau,M_0\) be as in \cref{lem:roof-control}.  If \(\Delta_\phi(\mu)\ge\delta_0\), then
\[
\left|\int f\,\dd\mu-\int f\,\dd\mu_\phi\right|
\le 2\norm{f}_\infty
\le 2\delta_0^{-1/2}\norm{f}_\theta\sqrt{\Delta_\phi(\mu)}.
\]

Assume now that \(\Delta_\phi(\mu)<\delta_0\).  Let \(\nu\) be the induced measure of \(\mu\), and set \(m=\int\tau\,\dd\nu\).  By \eqref{eq:f-ratio} and \eqref{eq:f-ratio-equilibrium},
\[
\left|\int f\,\dd\mu-\int f\,\dd\mu_\phi\right|
\le
\left|\frac1m\int G_f\,\dd\nu-\frac1{\bar\tau_\phi}\int G_f\,\dd\nu_\phi\right|.
\]
Thus
\[
\left|\int f\,\dd\mu-\int f\,\dd\mu_\phi\right|
\le
\frac1m\left|\int G_f\,\dd\nu-\int G_f\,\dd\nu_\phi\right|
+
\norm{G_f}_\infty\left|\frac1m-\frac1{\bar\tau_\phi}\right|.
\]
Since \(m\ge1\), \cref{thm:induced-gap,lem:defect-identity,lem:Gf,lem:roof-control} give
\[
\begin{aligned}
\left|\int G_f\,\dd\nu-\int G_f\,\dd\nu_\phi\right|
&\le \frac{|G_f|_\theta}{\sqrt2(1-\theta)}\sqrt{\Delta_Y(\nu)}\\
&= \frac{|G_f|_\theta}{\sqrt2(1-\theta)}\sqrt{m\Delta_\phi(\mu)}\\
&\le C_1\norm{f}_\theta\sqrt{\Delta_\phi(\mu)},
\end{aligned}
\]
where \(m\le M_0\) has been used in the last line.  Also,
\[
\left|\frac1m-\frac1{\bar\tau_\phi}\right|
=\frac{|m-\bar\tau_\phi|}{m\bar\tau_\phi}
\le \frac{C_\tau}{\bar\tau_\phi}\sqrt{\Delta_\phi(\mu)}.
\]
Together with \(\norm{G_f}_\infty\le C_\theta\norm{f}_\theta\), this proves the desired estimate in the small-defect case.\end{proof}

\subsection{Proof of the main theorem}

\begin{proof}[Proof of \cref{thm:main}]
The pressure identity and uniqueness of \(\mu_\phi\) were proved in \cref{prop:equilibrium}.  It remains only to prove the effective estimate for an arbitrary invariant measure \(\mu\).

Set
\[
a=\mu(\{0^\infty\}),
\qquad
\beta=1-a,
\qquad
\gamma=P-\phi(0^\infty)>0.
\]
If \(\beta=0\), then \(\mu=\delta_{0^\infty}\) and \(\Delta_\phi(\mu)=\gamma\), so
\[
\left|\int f\,\dd\mu-\int f\,\dd\mu_\phi\right|
\le 2\gamma^{-1/2}\norm{f}_\theta\sqrt{\Delta_\phi(\mu)}.
\]
Assume \(\beta>0\), and let \(\widetilde\mu\) be as in \cref{lem:atom-splitting}.  Then \(\widetilde\mu(\{0^\infty\})=0\), and \cref{prop:no-atom-effective} gives
\[
\left|\int f\,\dd\widetilde\mu-\int f\,\dd\mu_\phi\right|
\le C_0\norm{f}_\theta\sqrt{\Delta_\phi(\widetilde\mu)}.
\]
By \eqref{eq:defect-splitting},
\[
\Delta_\phi(\mu)=a\gamma+\beta\Delta_\phi(\widetilde\mu).
\]
Therefore
\[
a\le \frac{\Delta_\phi(\mu)}\gamma,
\qquad
\beta\sqrt{\Delta_\phi(\widetilde\mu)}
=\sqrt{\beta(\beta\Delta_\phi(\widetilde\mu))}
\le \sqrt{\Delta_\phi(\mu)}.
\]
It follows that
\[
\begin{aligned}
\left|\int f\,\dd\mu-\int f\,\dd\mu_\phi\right|
&\le
\beta\left|\int f\,\dd\widetilde\mu-\int f\,\dd\mu_\phi\right|
+a\left|f(0^\infty)-\int f\,\dd\mu_\phi\right|\\
&\le
C_0\norm{f}_\theta\sqrt{\Delta_\phi(\mu)}
+2\norm{f}_\infty\frac{\Delta_\phi(\mu)}\gamma.
\end{aligned}
\]
Finally, since \(h_\mu(\sigma)\ge0\) and \(\int\phi\,\dd\mu\ge-\norm{\phi}_\infty\),
\[
\Delta_\phi(\mu)
\le D_{\max}:=P+\norm{\phi}_\infty.
\]
Thus
\[
2\norm{f}_\infty\frac{\Delta_\phi(\mu)}\gamma
\le
\frac{2\sqrt{D_{\max}}}{\gamma}\norm{f}_\theta\sqrt{\Delta_\phi(\mu)}.
\]
Combining the last estimates proves the theorem.\end{proof}

\section{Cylinder-avoiding sets and dimension}\label{sec:applications}

Throughout this section we assume the hypotheses of \cref{thm:main}. In particular \(P=P_{\mathrm{top}}(\X,\sigma,\phi)\), and \(\mu_\phi\) denotes the unique equilibrium state.

Let \(C=[w]\subset\X\) be a cylinder determined by an admissible word \(w\) of length \(\ell\ge1\).  Define
\[
\mathcal E_C=\{x\in\X:\sigma^k x\notin C \text{ for every } k\ge0\}.
\]
This is a compact forward-invariant set.  If \(\mu\in\M_\sigma(\X)\) and \(\mu(\mathcal E_C)=1\), then \(\mu(C)=0\).

\begin{lemma}\label{lem:cylinder-holder}
The cylinder indicator \(\1_C\) belongs to \(C^\theta(\X)\), and
\[
\norm{\1_C}_\theta\le 1+\theta^{-(\ell-1)}.
\]
\end{lemma}

\begin{proof}
The sup norm is \(1\).  If \(\1_C(x)\ne\1_C(y)\), then \(x\) and \(y\) differ in one of the first \(\ell\) coordinates.  Hence \(d_\theta(x,y)\ge\theta^{\ell-1}\), and the H\"older seminorm is at most \(\theta^{-(\ell-1)}\).\end{proof}

\begin{proposition}\label{prop:cylinder-pressure-drop}
Assume \(\mu_\phi(C)>0\).  Then every invariant measure \(\mu\) supported on \(\mathcal E_C\) satisfies
\[
P-\left(h_\mu(\sigma)+\int\phi\,\dd\mu\right)
\ge
\kappa_C,
\]
where, if \(C_{\mathrm{eff}}\) denotes the constant from \cref{thm:main},
\[
\kappa_C=
\frac{\mu_\phi(C)^2}{C_{\mathrm{eff}}^2\norm{\1_C}_\theta^2}.
\]
Consequently,
\[
P_{\mathrm{top}}(\mathcal E_C,\sigma,\phi)
\le P-\kappa_C,
\]
where the pressure on \(\mathcal E_C\) is computed using invariant measures supported on \(\mathcal E_C\).
\end{proposition}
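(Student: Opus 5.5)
The plan is to apply \cref{thm:main} directly with the test function \(f=\1_C\), which lies in \(C^\theta(\X)\) by \cref{lem:cylinder-holder}. Let \(\mu\in\M_\sigma(\X)\) with \(\mu(\mathcal E_C)=1\). As noted just before \cref{lem:cylinder-holder}, this forces \(\mu(C)=0\): since \(\mathcal E_C\cap C=\emptyset\), we get \(\mu(C)\le\mu(\X\setminus\mathcal E_C)=0\). Then \cref{thm:main} gives
\[
\mu_\phi(C)=\left|\int\1_C\,\dd\mu-\int\1_C\,\dd\mu_\phi\right|
\le C_{\mathrm{eff}}\norm{\1_C}_\theta\sqrt{\Delta_\phi(\mu)}.
\]
Because \(\mu_\phi(C)>0\), both sides are positive. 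Squaring and dividing by \(C_{\mathrm{eff}}^2\norm{\1_C}_\theta^2\) yields \(\Delta_\phi(\mu)\ge\kappa_C\), which is the first claim.

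For the pressure statement, the plan is to use the convention stated in the proposition: \(P_{\mathrm{top}}(\mathcal E_C,\sigma,\phi)\) is the supremum of \(h_\mu(\sigma)+\int\phi\,\dd\mu\) over invariant measures supported on \(\mathcal E_C\). Each such measure has free energy at most \(P-\kappa_C\) by the first part, so the supremum is at most \(P-\kappa_C\). If one prefers the intrinsic topological pressure of the subsystem, the route is as follows. First note that \(\mathcal E_C\) is closed, being an intersection of the clopen sets \(\sigma^{-k}(\X\setminus C)\), and that \(\sigma(\mathcal E_C)\subset\mathcal E_C\). The variational principle for the continuous map \(\sigma|_{\mathcal E_C}\) on this compact space then identifies its pressure with the same supremum. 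Moreover, \(\sigma|_{\mathcal E_C}\)-invariant measures are exactly the \(\sigma\)-invariant measures on \(\X\) giving full mass to \(\mathcal E_C\), and their entropies agree.

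There is no real obstacle here. The only points that need a line of care are the implication \(\mu(\mathcal E_C)=1\Rightarrow\mu(C)=0\), and the observation that forward invariance (rather than full invariance) of \(\mathcal E_C\) suffices for the variational principle on the subsystem. If the supremum is over an empty family, the bound holds trivially.
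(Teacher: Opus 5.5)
Your proof is correct and follows the paper's argument: you apply \cref{thm:main} to \(f=\1_C\), use \(\mu(C)=0\), square to obtain \(\Delta_\phi(\mu)\ge\kappa_C\), and take the supremum over invariant measures supported on \(\mathcal E_C\). Your extra remarks go slightly beyond what the paper spells out, and they are correct: \(\mathcal E_C\) is closed and forward-invariant, and by the variational principle the bound also holds for the intrinsic topological pressure of \(\sigma|_{\mathcal E_C}\).
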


\begin{proof}
If \(\mu(\mathcal E_C)=1\), then \(\mu(C)=0\).  Applying \cref{thm:main} to \(f=\1_C\), we get
\[
\mu_\phi(C)
=
\left|\int\1_C\,\dd\mu-\int\1_C\,\dd\mu_\phi\right|
\le
C_{\mathrm{eff}}\norm{\1_C}_\theta\sqrt{\Delta_\phi(\mu)}.
\]
This gives the lower bound on \(\Delta_\phi(\mu)\).  Taking the supremum over all invariant measures supported on \(\mathcal E_C\) gives the pressure estimate.\end{proof}

In the entropy case \(\phi=0\), the pressure separation condition is \(h_{\mathrm{top}}(\X)>0\). Under this assumption \(\mu_0:=\mu_\phi\) is the unique measure of maximal entropy. Writing \(h=h_{\mathrm{top}}(\X)\), \cref{prop:cylinder-pressure-drop} gives
\begin{equation}\label{eq:entropy-drop}
h_{\mathrm{top}}(\mathcal E_C)
\le h_{\mathrm{top}}(\X)-\kappa_C
\end{equation}
whenever \(\mu_0(C)>0\) and \(\mathcal E_C\) is nonempty. If \(\mathcal E_C\) is empty, the pressure and entropy estimates are trivial; in the Hausdorff-dimension statement below we use a slightly smaller positive gap to cover both cases without relying on a convention for the entropy of the empty set.

For the dimension estimate we use cylinder-covering entropy.  This is the symbolic form of a standard entropy-dimension estimate; see, for example, Pesin's text on dimension theory \cite{pesin1997dimension}.  For compact \(Z\subset\X\), let \(N_n(Z)\) be the least number of length-\(n\) cylinders needed to cover \(Z\), and set
\[
h_{\mathrm{cyl}}(Z)=\limsup_{n\to\infty}\frac1n\log N_n(Z).
\]
For compact shift-invariant subsets of a one-sided subshift, this agrees with the usual topological entropy.

\begin{lemma}\label{lem:entropy-dimension}
For every compact \(Z\subset\X\),
\[
\dim_H(Z,d_\theta)
\le
\frac{h_{\mathrm{cyl}}(Z)}{-\log\theta}.
\]
\end{lemma}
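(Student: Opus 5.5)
The plan is to cover \(Z\) by cylinders and use them directly as covers in the definition of Hausdorff measure. First, for the metric \(d_\theta\), a length-\(n\) cylinder \([w]\subset\X\) has diameter at most \(\theta^n\). Indeed, two points of \([w]\) agree in coordinates \(0,\ldots,n-1\), so \(t(x,y)\ge n\) and \(d_\theta(x,y)\le\theta^n\).

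Next, fix \(s>h_{\mathrm{cyl}}(Z)/(-\log\theta)\) and pick \(\varepsilon>0\) with \(s(-\log\theta)>h_{\mathrm{cyl}}(Z)+\varepsilon\). By the definition of the limsup, there is \(n_0\) such that \(N_n(Z)\le e^{n(h_{\mathrm{cyl}}(Z)+\varepsilon)}\) for all \(n\ge n_0\). For such \(n\), cover \(Z\) by \(N_n(Z)\) cylinders of length \(n\). Each has diameter at most \(\theta^n\), which can be made smaller than any prescribed \(\delta>0\). The \(s\)-dimensional sum satisfies
\[
\sum \operatorname{diam}^s\le N_n(Z)\theta^{ns}\le \exp\bigl(n(h_{\mathrm{cyl}}(Z)+\varepsilon+s\log\theta)\bigr),
\]
and this tends to \(0\) as \(n\to\infty\) because the exponent's coefficient is negative. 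Hence \(\mathcal H^s_\delta(Z)=0\) for every \(\delta>0\), so \(\mathcal H^s(Z)=0\) and \(\dim_H(Z,d_\theta)\le s\). Letting \(s\) decrease to \(h_{\mathrm{cyl}}(Z)/(-\log\theta)\) gives the claim.

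There are two edge cases. If \(h_{\mathrm{cyl}}(Z)=\infty\) there is nothing to prove, but this cannot happen anyway, since \(N_n(Z)\le 2^n\). If \(Z\) is empty, the statement is trivial. The only point needing care is the diameter bound, in particular making sure the cylinders are intersected with \(\X\) so that they are genuine subsets of the metric space. No step is a real obstacle: this is the standard upper-box-dimension bound, and it needs no results from earlier in the paper.
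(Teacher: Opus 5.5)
Your proof is correct and follows essentially the same argument as the paper. In both, \(Z\) is covered by at most \(e^{n(h_{\mathrm{cyl}}(Z)+\varepsilon)}\) cylinders of length \(n\), each of diameter at most \(\theta^n\); choosing \(\varepsilon\) with \(h_{\mathrm{cyl}}(Z)+\varepsilon+s\log\theta<0\) and letting \(n\to\infty\) gives \(\mathcal H^s(Z)=0\). Your explicit handling of \(\mathcal H^s_\delta\) and of the empty set only spells out what the paper leaves implicit.
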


\begin{proof}
Let \(h_Z=h_{\mathrm{cyl}}(Z)\), and choose \(s>h_Z/(-\log\theta)\). For every small \(\varepsilon>0\) and all large \(n\), the set \(Z\) can be covered by at most \(\exp(n(h_Z+\varepsilon))\) cylinders of length \(n\). Each such cylinder has diameter at most \(\theta^n\). Thus the \(s\)-dimensional Hausdorff content is bounded by
\[
\exp(n(h_Z+\varepsilon))\theta^{ns}.
\]
Choose \(\varepsilon>0\) so that \(h_Z+\varepsilon+s\log\theta<0\), and let \(n\to\infty\). This gives \(\mathcal H^s(Z)=0\).\end{proof}

\begin{corollary}\label{cor:dimension-drop}
Assume \(\phi=0\) and \(h_{\mathrm{top}}(\X)>0\). Let \(C\subset\X\) be a cylinder with \(\mu_0(C)>0\), and set
\[
\widehat\kappa_C=\min\left\{\kappa_C,\frac12h_{\mathrm{top}}(\X)\right\}.
\]
Then \(\widehat\kappa_C>0\) and
\[
\dim_H(\mathcal E_C,d_\theta)
\le
\frac{h_{\mathrm{top}}(\X)-\widehat\kappa_C}{-\log\theta}
<
\frac{h_{\mathrm{top}}(\X)}{-\log\theta}.
\]
\end{corollary}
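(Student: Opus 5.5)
The positivity of \(\widehat\kappa_C\) is immediate: \(\kappa_C>0\) because \(\mu_0(C)>0\) and \(C_{\mathrm{eff}}\norm{\1_C}_\theta<\infty\) by \cref{lem:cylinder-holder}, while \(\tfrac12h_{\mathrm{top}}(\X)>0\) by assumption. The dimension bound will then follow from \cref{lem:entropy-dimension} applied to \(Z=\mathcal E_C\), once we show \(h_{\mathrm{cyl}}(\mathcal E_C)\le h_{\mathrm{top}}(\X)-\widehat\kappa_C\). The final strict inequality is trivial since \(\widehat\kappa_C>0\).

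We split according to whether \(\mathcal E_C\) is empty. If \(\mathcal E_C=\emptyset\), then \(N_n(\mathcal E_C)=0\). With the convention \(\log 0=-\infty\), or simply because \(\dim_H\emptyset=0\) is defined directly, the left-hand side is \(0\). The right-hand side is at least \(\tfrac12h_{\mathrm{top}}(\X)/(-\log\theta)>0\), because \(\widehat\kappa_C\le\tfrac12h_{\mathrm{top}}(\X)\). This is exactly why the minimum with \(\tfrac12h_{\mathrm{top}}\) was introduced.

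If \(\mathcal E_C\neq\emptyset\), it is a nonempty compact set with \(\sigma(\mathcal E_C)\subset\mathcal E_C\), so \((\mathcal E_C,\sigma)\) is a one-sided subshift. By the remark preceding \cref{lem:entropy-dimension}, its cylinder-covering entropy equals its topological entropy. The variational principle on \(\mathcal E_C\) expresses \(h_{\mathrm{top}}(\mathcal E_C)\) as a supremum over invariant measures supported on \(\mathcal E_C\). \Cref{prop:cylinder-pressure-drop} with \(\phi=0\), that is \eqref{eq:entropy-drop}, then gives \(h_{\mathrm{top}}(\mathcal E_C)\le h_{\mathrm{top}}(\X)-\kappa_C\le h_{\mathrm{top}}(\X)-\widehat\kappa_C\). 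Plugging this into \cref{lem:entropy-dimension} finishes the proof.

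The only delicate point is identifying \(h_{\mathrm{cyl}}(\mathcal E_C)\) with the measure-theoretic supremum. Let \(\mathcal L_n\) be the set of length-\(n\) words occurring as prefixes of points of \(\mathcal E_C\). Then \(N_n(\mathcal E_C)=|\mathcal L_n|\). Forward invariance makes \(\log|\mathcal L_n|\) subadditive, so \(h_{\mathrm{cyl}}\) is the usual limit defining topological entropy of the subshift. The variational principle for the compact system \((\mathcal E_C,\sigma)\) then applies. I expect this step to be routine, and I would cite it rather than reprove it.
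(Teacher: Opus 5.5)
Your proposal is correct and matches the paper's proof. It uses the same split into the empty case, handled by \(\widehat\kappa_C\le\frac12 h_{\mathrm{top}}(\X)\), and the nonempty case, where one identifies \(h_{\mathrm{cyl}}(\mathcal E_C)\) with \(h_{\mathrm{top}}(\mathcal E_C)\), applies \eqref{eq:entropy-drop}, and concludes with \cref{lem:entropy-dimension}. Your remarks on subadditivity of \(\log|\mathcal L_n|\) and on the variational principle for \((\mathcal E_C,\sigma)\) only spell out what the paper treats as standard.
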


\begin{proof}
If \(\mathcal E_C=\emptyset\), then \(\dim_H(\mathcal E_C,d_\theta)=0\), and the displayed bound is immediate because \(\widehat\kappa_C\le h_{\mathrm{top}}(\X)/2\). If \(\mathcal E_C\) is nonempty, then it is compact and shift-invariant, so \(h_{\mathrm{cyl}}(\mathcal E_C)=h_{\mathrm{top}}(\mathcal E_C)\). By \eqref{eq:entropy-drop},
\[
h_{\mathrm{cyl}}(\mathcal E_C)\le h_{\mathrm{top}}(\X)-\kappa_C\le h_{\mathrm{top}}(\X)-\widehat\kappa_C.
\]
The claim follows from \cref{lem:entropy-dimension}.\end{proof}

\begin{example}
Let \(a\in S\), and let \(C_a=[10^a1]\). Under the entropy assumptions of \cref{cor:dimension-drop}, the renewal equation is
\[
\sum_{n\in S}e^{-h(n+1)}=1,
\]
and the induced Bernoulli weights of \(\mu_0\) are \(p_n=e^{-h(n+1)}\).  By \cref{cor:block-weights},
\[
\mu_0(C_a)=\mu_0([1])e^{-h(a+1)}.
\]
Since \(C_a\) has defining word length \(a+2\), \cref{lem:cylinder-holder} gives
\[
\norm{\1_{C_a}}_\theta\le 1+\theta^{-(a+1)}.
\]
Thus the set of points whose orbits never enter \([10^a1]\) has entropy at most \(h-\widehat\kappa_{C_a}\), where one may take
\[
\widehat\kappa_{C_a}
=
\min\left\{
\frac h2,
\frac{\mu_0([1])^2e^{-2h(a+1)}}{C_{\mathrm{eff}}^2(1+\theta^{-(a+1)})^2}
\right\}.
\]
\end{example}

\section{Final remarks}

The main result gives an effective intrinsic ergodicity for renewal-type potentials on one-sided \(S\)-gap shifts.  The estimate is proved on the original compact symbolic system, not only on the induced countable renewal shift.  This distinction is important: the passage from the induced system back to \(\X\) changes both observables and pressure defects.

The proof has two main ingredients.  First, the square-root pressure-gap inequality on the induced renewal full shift is obtained directly from conditional relative entropy and a telescoping decomposition of H\"older observables.  In the renewal-type setting, the induced potential is one-symbol and the induced equilibrium measure is Bernoulli, so this part of the argument does not require spectral theory or the general theory of strongly positively recurrent countable Markov shifts.

Second, the transfer from the induced system to \(\X\) requires more than a formal lifting argument.  If \(f\in C^\theta(\X)\), then the induced observable
\[
F_f(y)=\sum_{j=0}^{\tau(y)-1}f(\sigma^j\pi(y))
\]
is generally unbounded because of the roof term.  The decomposition
\[
F_f=f(0^\infty)\tau+G_f
\]
separates this unbounded part from a bounded H\"older remainder.  The return-time average is then controlled by a pressure-gap estimate, using the exponential tail of the induced Bernoulli weights.  The atom-splitting step is also essential: Kac--Abramov formulas are applied only after removing the possible fixed-point atom at \(0^\infty\).

The cylinder-avoidance application shows one concrete consequence of the effective estimate.  If an orbit avoids a fixed cylinder of positive equilibrium measure, then the corresponding exceptional set has a uniform pressure drop.  In the entropy case this gives a strict entropy drop and hence a strict Hausdorff-dimension upper bound in the symbolic metric.

Two natural questions remain.  One is whether the renewal-type assumption can be replaced by a broader class of H\"older potentials for which the induced equilibrium measure is no longer Bernoulli.  Another is whether the same atom-splitting and return-time control can be adapted to other coded symbolic systems with a renewal-type inducing scheme.

\bibliographystyle{amsplain}
\bibliography{mybib}

@book{aaronson1997,
  author    = {Aaronson, Jon},
  title     = {An Introduction to Infinite Ergodic Theory},
  series    = {Mathematical Surveys and Monographs},
  volume    = {50},
  publisher = {American Mathematical Society},
  address   = {Providence, RI},
  year      = {1997},
  doi       = {10.1090/surv/050}
}

@article{abramov1959,
  author  = {Abramov, L. M.},
  title   = {The entropy of a derived automorphism},
  journal = {Doklady Akademii Nauk SSSR},
  volume  = {128},
  pages   = {647--650},
  year    = {1959},
  note    = {In Russian; English translation in American Mathematical Society Translations, Series 2, 49 (1966), 162--166}
}

@book{baladi2000positive,
  author    = {Baladi, Viviane},
  title     = {Positive Transfer Operators and Decay of Correlations},
  series    = {Advanced Series in Nonlinear Dynamics},
  volume    = {16},
  publisher = {World Scientific},
  address   = {Singapore},
  year      = {2000}
}

@inproceedings{baladi2014linear,
  author    = {Baladi, Viviane},
  title     = {Linear response, or else},
  booktitle = {Proceedings of the International Congress of Mathematicians, Seoul 2014},
  volume    = {III},
  pages     = {525--545},
  publisher = {Kyung Moon Sa},
  address   = {Seoul},
  year      = {2014}
}

@book{bowen1975equilibrium,
  author    = {Bowen, Rufus},
  title     = {Equilibrium States and the Ergodic Theory of {Anosov} Diffeomorphisms},
  series    = {Lecture Notes in Mathematics},
  volume    = {470},
  publisher = {Springer-Verlag},
  address   = {Berlin},
  year      = {1975}
}

@article{climenhaga2012intrinsic,
  author  = {Climenhaga, Vaughn and Thompson, Daniel J.},
  title   = {Intrinsic ergodicity beyond specification: {$\beta$}-shifts, {$S$}-gap shifts, and their factors},
  journal = {Israel Journal of Mathematics},
  volume  = {192},
  number  = {2},
  pages   = {785--817},
  year    = {2012},
  doi     = {10.1007/s11856-012-0056-0}
}

@article{climenhaga2013bowen,
  author  = {Climenhaga, Vaughn and Thompson, Daniel J.},
  title   = {Equilibrium states beyond specification and the {Bowen} property},
  journal = {Journal of the London Mathematical Society},
  volume  = {87},
  number  = {2},
  pages   = {401--427},
  year    = {2013},
  doi     = {10.1112/jlms/jds054}
}

@article{climenhaga2016unique,
  author  = {Climenhaga, Vaughn and Thompson, Daniel J.},
  title   = {Unique equilibrium states for flows and homeomorphisms with non-uniform structure},
  journal = {Advances in Mathematics},
  volume  = {303},
  pages   = {745--799},
  year    = {2016},
  doi     = {10.1016/j.aim.2016.08.013}
}

@article{dastjerdi2012,
  author  = {Dastjerdi, Dawoud Ahmadi and Jangjoo, Somaye},
  title   = {Dynamics and topology of {$S$}-gap shifts},
  journal = {Topology and its Applications},
  volume  = {159},
  number  = {10--11},
  pages   = {2654--2661},
  year    = {2012},
  doi     = {10.1016/j.topol.2012.04.002}
}

@article{gouezel2006banach,
  author  = {Gou\"ezel, S\'ebastien and Liverani, Carlangelo},
  title   = {Banach spaces adapted to {Anosov} systems},
  journal = {Ergodic Theory and Dynamical Systems},
  volume  = {26},
  number  = {1},
  pages   = {189--217},
  year    = {2006},
  doi     = {10.1017/S0143385705000374}
}

@book{gray2011,
  author    = {Gray, Robert M.},
  title     = {Entropy and Information Theory},
  edition   = {2},
  publisher = {Springer},
  address   = {New York},
  year      = {2011}
}

@article{kac1947,
  author  = {Kac, Mark},
  title   = {On the notion of recurrence in discrete stochastic processes},
  journal = {Bulletin of the American Mathematical Society},
  volume  = {53},
  pages   = {1002--1010},
  year    = {1947},
  doi     = {10.1090/S0002-9904-1947-08927-8}
}

@article{kadyrov2015effective,
  author  = {Kadyrov, Shirali},
  title   = {Effective uniqueness of {Parry} measure and exceptional sets in ergodic theory},
  journal = {Monatshefte f\"ur Mathematik},
  volume  = {178},
  number  = {2},
  pages   = {237--249},
  year    = {2015},
  doi     = {10.1007/s00605-014-0712-8}
}

@book{keller1998equilibrium,
  author    = {Keller, Gerhard},
  title     = {Equilibrium States in Ergodic Theory},
  series    = {London Mathematical Society Student Texts},
  volume    = {42},
  publisher = {Cambridge University Press},
  address   = {Cambridge},
  year      = {1998}
}

@article{keller1999stability,
  author  = {Keller, Gerhard and Liverani, Carlangelo},
  title   = {Stability of the spectrum for transfer operators},
  journal = {Annali della Scuola Normale Superiore di Pisa, Classe di Scienze},
  volume  = {28},
  number  = {1},
  pages   = {141--152},
  year    = {1999}
}

@book{lindmarcus2021,
  author    = {Lind, Douglas and Marcus, Brian},
  title     = {An Introduction to Symbolic Dynamics and Coding},
  edition   = {2},
  publisher = {Cambridge University Press},
  address   = {Cambridge},
  year      = {2021},
  doi       = {10.1017/9781108899727}
}

@book{pesin1997dimension,
  author    = {Pesin, Yakov B.},
  title     = {Dimension Theory in Dynamical Systems: Contemporary Views and Applications},
  series    = {Chicago Lectures in Mathematics},
  publisher = {University of Chicago Press},
  address   = {Chicago},
  year      = {1997}
}

@article{pollicott2026,
  title={Effective intrinsic ergodicity for expanding interval maps},
  author={Pollicott, Mark},
  journal={arXiv preprint arXiv:2606.12081},
  year={2026}
}

@article{ruhr2021pressure,
  author  = {R\"uhr, Ren\'e},
  title   = {Pressure inequalities for {Gibbs} measures of countable {Markov} shifts},
  journal = {Dynamical Systems},
  volume  = {36},
  number  = {2},
  pages   = {332--339},
  year    = {2021},
  doi     = {10.1080/14689367.2020.1856422}
}

@article{ruhr2022effective,
  author  = {R\"uhr, Ren\'e and Sarig, Omri},
  title   = {Effective intrinsic ergodicity for countable state {Markov} shifts},
  journal = {Israel Journal of Mathematics},
  volume  = {251},
  number  = {2},
  pages   = {679--735},
  year    = {2022},
  doi     = {10.1007/s11856-022-2385-x}
}

@book{ruelle2004thermodynamic,
  author    = {Ruelle, David},
  title     = {Thermodynamic Formalism: The Mathematical Structures of Equilibrium Statistical Mechanics},
  publisher = {Cambridge University Press},
  address   = {Cambridge},
  edition   = {2},
  year      = {2004}
}

@article{sarig1999thermodynamic,
  author  = {Sarig, Omri M.},
  title   = {Thermodynamic formalism for countable {Markov} shifts},
  journal = {Ergodic Theory and Dynamical Systems},
  volume  = {19},
  number  = {6},
  pages   = {1565--1593},
  year    = {1999},
  doi     = {10.1017/S0143385799171001}
}

@article{sarig2001thermodynamic,
  author  = {Sarig, Omri M.},
  title   = {Thermodynamic formalism for null recurrent potentials},
  journal = {Israel Journal of Mathematics},
  volume  = {121},
  pages   = {285--311},
  year    = {2001},
  doi     = {10.1007/BF02804979}
}

@article{walters1975variational,
  author  = {Walters, Peter},
  title   = {A variational principle for the pressure of continuous transformations},
  journal = {American Journal of Mathematics},
  volume  = {97},
  number  = {4},
  pages   = {937--971},
  year    = {1975},
  doi     = {10.2307/2373688}
}

@book{walters1982,
  author    = {Walters, Peter},
  title     = {An Introduction to Ergodic Theory},
  series    = {Graduate Texts in Mathematics},
  volume    = {79},
  publisher = {Springer-Verlag},
  address   = {New York},
  year      = {1982}
}

\end{document}